\newtheorem{theorem}{Theorem}
\newtheorem{proposition}[theorem]{Proposition}
\newtheorem{lemma}[theorem]{Lemma}
\newtheorem{problem}[theorem]{Problem}
\theoremstyle{definition}
\newcommand{\Cat}{{{\sf Cat}}}
\newcommand{\Park}{{{\sf Park}}}
\newcommand{\Ish}{{{\sf Ish}}}
\newcommand{\Rook}{{\sf{Rook}}}
\newcommand{\Shi}{{{\sf Shi}}}
\newcommand{\Cox}{{{\sf Cox}}}
\newcommand{\Rec}{{{\sf Rec}}}
\newcommand{\Stir}{{{\sf Stir}}}
\newcommand{\symm}{{\mathfrak{S}}}
\newcommand{\ZZ}{{\mathbb {Z}}}
\newcommand{\RR}{{\mathbb {R}}}
\newcommand{\A}{{\mathcal{A}}}
\begin{document}

\title[Bijections for the Shi and Ish  arrangements]
{Bijections for the Shi and Ish  arrangements}

\author{Emily Leven, Brendon Rhoades, and Andrew Timothy Wilson}
\address
{Deptartment of Mathematics \newline \indent
University of California, San Diego \newline \indent
La Jolla, CA, 92093-0112, USA}
\email{esergel@math.ucsd.edu, bprhoades@math.ucsd.edu, atwilson@math.ucsd.edu}


\begin{abstract}
The {\sf Shi hyperplane arrangement} 
$\Shi(n)$
was introduced by Shi to study the Kazhdan-Lusztig
cellular structure of the affine symmetric group.  The {\sf Ish hyperplane arrangement} 
$\Ish(n)$
was introduced by Armstrong
in the study of diagonal harmonics.  Armstrong and Rhoades discovered 
a deep combinatorial similarity between the Shi and Ish arrangements.  We solve a collection
of problems posed by Armstrong \cite{A} and Armstrong and Rhoades \cite{AR} by giving  bijections
between regions of $\Shi(n)$ and $\Ish(n)$ which preserve certain statistics.
Our bijections generalize to the `deleted arrangements' $\Shi(G)$ and $\Ish(G)$ which depend
on a subgraph $G$ of the complete graph $K_n$ on $n$ vertices.
The key tools in our bijections are the introduction of an Ish analog of parking functions called 
{\sf rook words} and a new instance of the cycle lemma of enumerative combinatorics.  
\end{abstract}

\keywords{bijection, hyperplane arrangement, rook placement}
\maketitle

\section{Introduction}
\label{Introduction}

In this paper we give the first bijective proofs of several equidistribution results due to
Armstrong \cite{A} and Armstrong and Rhoades \cite{AR}.  The sets underlying these results 
are regions of hyperplane arrangements related to ``Shi/Ish duality". 
We begin by defining these arrangements.

The {\sf Coxeter arrangement} $\Cox(n)$ of type A$_{n-1}$ (otherwise known as the braid arrangement)
is the linear arrangement of hyperplanes in $\RR^n$ given by
\begin{equation}
\label{coxeter-arrangement}
\Cox(n) := \{ x_i - x_j = 0 \,:\, 1 \leq i < j \leq n \},
\end{equation}
where $x_1, \dots, x_n$ are the standard coordinate functions on $\RR^n$.  
Recall that the {\sf regions} of a hyperplane arrangement $\A$ in $\RR^n$ are the connected 
components of the complement $\RR^n - \bigcup_{H \in \A} H$.
The regions of 
$\Cox(n)$ biject naturally with permutations in the symmetric group $\symm_n$ by letting 
a permutation $\pi = \pi_1 \dots \pi_n \in \symm_n$ correspond to the region of 
$\Cox(n)$ defined by the coordinate inequalities $x_{\pi_1} > \dots > x_{\pi_n}$.

The {\sf Shi arrangement} $\Shi(n)$ of type A$_{n-1}$ is the affine hyperplane arrangement
in $\RR^n$ given by
\begin{equation}
\label{shi-arrangement}
\Shi(n) := \Cox(n) \cup \{x_i - x_j = 1 \,:\, 1 \leq  i < j \leq n \}.
\end{equation}
The arrangement $\Shi(n)$ was introduced by J.-Y. Shi \cite{Shi}
to study the Kazhdan-Lusztig cells of the affine
symmetric group $\widetilde{\symm}_n$
and has  received a great deal of attention in algebraic and enumerative combinatorics.
Shi proved that the number of regions of $\Shi(n)$ is the famous expression $(n+1)^{n-1}$.
The Shi arrangement is a {\sf deformation} of the Coxeter arrangement in the sense of 
Postnikov and Stanley \cite{PS},
 i.e., the hyperplanes of $\Shi(n)$ are parallel to the hyperplanes of $\Cox(n)$.
 The arrangement $\Shi(3)$ is shown on the left in Figure~\ref{shiishthree}.

The {\sf Ish arrangement} $\Ish(n)$ is a different deformation of the Coxeter arrangement in $\RR^n$.
The hyperplanes of $\Ish(n)$ are given by
\begin{equation*}
\label{ish-arrangement}
\Ish(n) := \Cox(n) \cup \{ x_1 - x_j = i \,:\, 1 \leq  i < j \leq n \}.
\end{equation*}
As with the Shi arrangement, the initial motivation for the Ish arrangement was representation theoretic.
Armstrong defined $\Ish(n)$ to obtain an interpretation of the bounce statistic of Haglund as a statistic
on the type A root lattice and develop a connection between hyperplane arrangements
and diagonal harmonics.  The arrangement $\Ish(3)$ is shown on the right in Figure~\ref{shiishthree}.

Armstrong proved the following enumerative symmetry of the Shi and Ish arrangements.

\begin{figure}
\centering
\includegraphics[scale = 0.5]{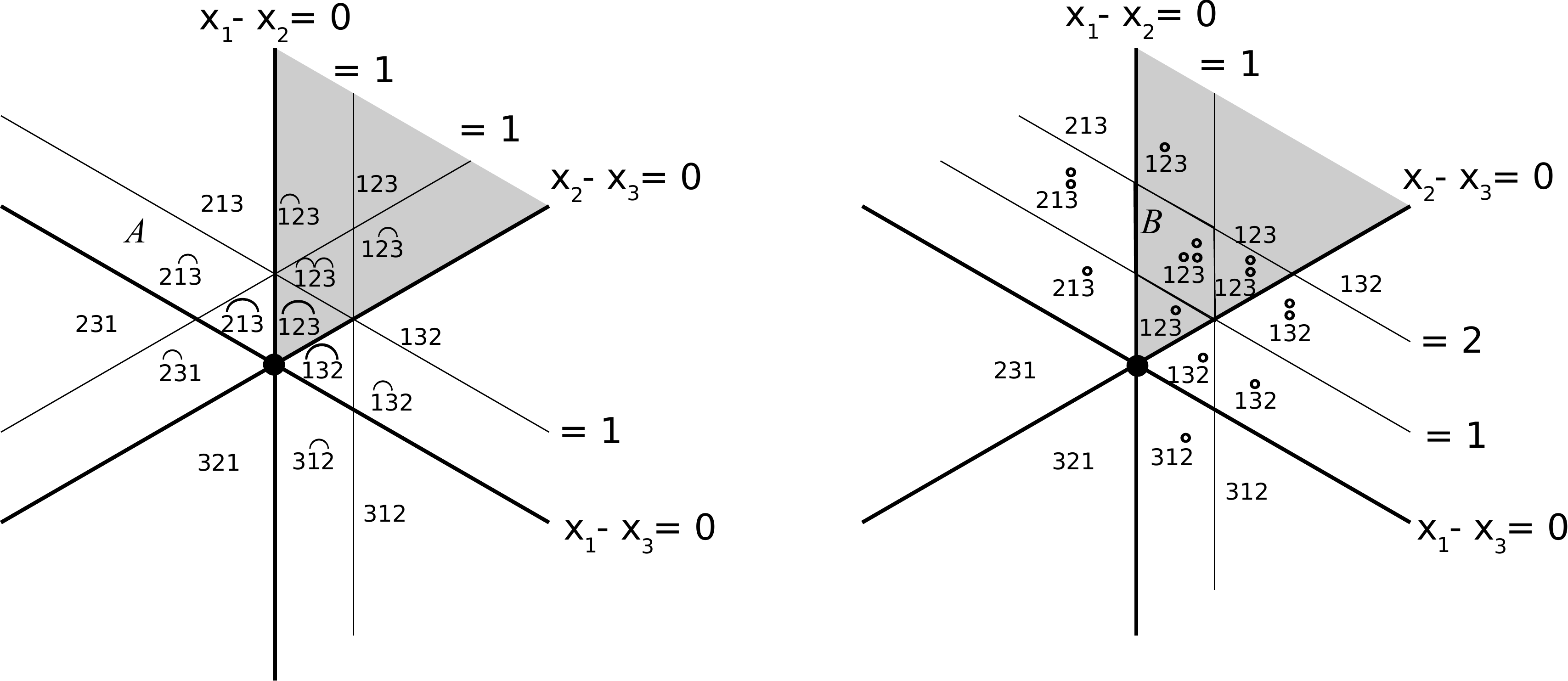}
\caption{The arrangements $\Shi(3)$ and $\Ish(3)$ labeled by Shi and Ish ceiling diagrams.}
\label{shiishthree}
\end{figure}

\begin{theorem} (Armstrong)
\label{shi-ish-basic-count}
The arrangements $\Shi(n)$ and $\Ish(n)$ have the same number of regions.
\end{theorem}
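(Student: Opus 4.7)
Since Shi already established that $\Shi(n)$ has $(n+1)^{n-1}$ regions, the task reduces to showing $\Ish(n)$ has the same count. The plan is to apply the finite field method of Athanasiadis together with Zaslavsky's theorem: for a prime power $q$ exceeding the absolute values of the constants appearing in $\Ish(n)$, count the number $N(q)$ of tuples $(x_1, \ldots, x_n) \in \mathbb{F}_q^n$ lying off every hyperplane of $\Ish(n)$. Athanasiadis's theorem identifies $N(q)$ with the characteristic polynomial $\chi_{\Ish(n)}(q)$, and Zaslavsky's theorem then yields $|R(\Ish(n))| = (-1)^n \chi_{\Ish(n)}(-1)$.

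To compute $N(q)$, first select $x_1 \in \mathbb{F}_q$ freely, giving $q$ options. Then pick $x_n, x_{n-1}, \ldots, x_2$ in reverse order. When choosing $x_j$, the Coxeter hyperplane $x_1 - x_j = 0$ together with the Ish hyperplanes $x_1 - x_j = i$ for $1 \leq i < j$ forbid $x_j$ from the set $F_j := \{x_1, x_1 - 1, \ldots, x_1 - (j-1)\}$ of size $j$, while the remaining Coxeter conditions $x_j \neq x_k$ for $k > j$ forbid $x_j$ from the $n - j$ values already selected. The key observation is that each previously placed $x_k$ (with $k > j$) was itself chosen outside $F_k \supseteq F_j$, so the two forbidden sets are disjoint. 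This leaves exactly $q - n$ valid choices for $x_j$, and multiplying across $j = n, n-1, \ldots, 2$ gives $N(q) = q(q-n)^{n-1}$.

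From $\chi_{\Ish(n)}(q) = q(q-n)^{n-1}$, Zaslavsky's formula produces
\[
|R(\Ish(n))| = (-1)^n \chi_{\Ish(n)}(-1) = (-1)^n(-1)(-n-1)^{n-1} = (n+1)^{n-1},
\]
which matches Shi's count for $\Shi(n)$ and proves the theorem.

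The main delicate point is the disjointness assertion in the reverse-order argument. Choosing the coordinates in the natural order $x_2, x_3, \ldots, x_n$ would cause the two forbidden sets to overlap in a way that depends on the earlier choices, obscuring the factorization $(q-n)^{n-1}$ and forcing a messy case analysis. The reverse order sidesteps this precisely because the later-indexed coordinates satisfy stricter forbidden-set conditions whose constraints contain those imposed on the earlier-indexed coordinates, yielding the clean product formula.
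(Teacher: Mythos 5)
Your proposal is correct and follows essentially the same route as the paper: Armstrong's argument likewise obtains $\chi_{\Ish(n)}(q) = q(q-n)^{n-1}$ by the finite field method and concludes via Zaslavsky's theorem together with the known count $(n+1)^{n-1}$ for $\Shi(n)$, and your reverse-order point count is a valid way of carrying out that computation. One small caution: take $q$ to be a prime greater than $n$ rather than merely a prime power exceeding the constants, since for $q = p^k$ with $p$ small the constants $1, \dots, j-1$ can collide modulo $p$, making $F_j$ smaller than $j$; with $q$ a large prime the identity $N(q) = q(q-n)^{n-1}$ holds for infinitely many $q$ and determines the characteristic polynomial.
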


Armstrong proved Theorem~\ref{shi-ish-basic-count} by using the
finite fields method of Crapo and Rota \cite{CR} to show that the characteristic polynomial
$\chi_{\Ish(n)}(q)$ of the Ish arrangement is given by
$\chi_{\Ish(n)}(q) = q (q-n)^{n-1}$.  Headley \cite{H} proved that the characteristic polynomial 
$\chi_{\Shi(n)}(q)$ of the Shi arrangement is also equal to $q (q-n)^{n-1}$.  
It follows from Zaslavsky's Theorem \cite{Z} that 
$\Shi(n)$ and $\Ish(n)$ both have $(n+1)^{n-1}$ regions.
Armstrong posed the following problem.

\begin{problem} (Armstrong)
\label{basic-problem}
Find a bijection between the regions of $\Shi(n)$ and $\Ish(n)$.
\end{problem}

Theorem~\ref{shi-ish-basic-count} and Problem~\ref{basic-problem}
inspired Armstrong and Rhoades \cite{AR} to study the combinatorics of the 
Shi and Ish arrangements more deeply.
Their analysis starts with the basic observation  that the non-linear hyperplanes of $\Shi(n)$ and $\Ish(n)$
are in the following bijective correspondence:
\begin{equation}
\label{shi-ish-duality}
x_i - x_j = 1 \longleftrightarrow x_1 - x_j = i,
\end{equation}
for $1 \leq i < j \leq n$.  This correspondence is referred to as
``Shi/Ish duality" and preserves a remarkable amount of combinatorial information.
In order to state these equidistribution results, we will need some definitions.

Let $K_n$ denote the complete graph on the vertex set $[n] := \{1, 2, \dots, n\}$.
For any graph $G \subseteq {[n] \choose 2}$ on $[n]$ (which we identify with its edge set),
we define the {\sf deleted Shi and Ish arrangements} $\Shi(G)$ and $\Ish(G)$ to have 
hyperplanes
\begin{align}
\Shi(G) &= \Cox(n) \cup \{ x_i - x_j = 1 \,:\, (i < j) \in G \}, \\
\Ish(G) &= \Cox(n) \cup \{ x_1 - x_j = i \,:\, (i < j) \in G \}.
\end{align}
When $G = K_n$ we recover the standard Shi and Ish arrangements:
$\Shi(K_n) = \Shi(n)$ and $\Ish(K_n) = \Ish(n)$.  When $G = \emptyset$ is the graph
with no edges, we have 
$\Shi(\emptyset) = \Ish(\emptyset) = \Cox(n)$.  The arrangement $\Shi(G)$ was first 
considered by Athanasiadis and Linusson 
(and should not be confused with the `$G$-Shi arrangements' of Duval, Klivans, and Martin).
Figure~\ref{shiishrestrict} shows the arrangements $\Shi(G)$ and $\Ish(G)$ when $n = 3$
and $G$ is the ``path" $1 - 2 - 3$.
We describe two
statistics -- ``degrees of freedom" and ``ceiling partitions" -- defined on the regions
of the arrangements $\Shi(G)$ and $\Ish(G)$.

\begin{figure}
\centering
\includegraphics[scale = 0.5]{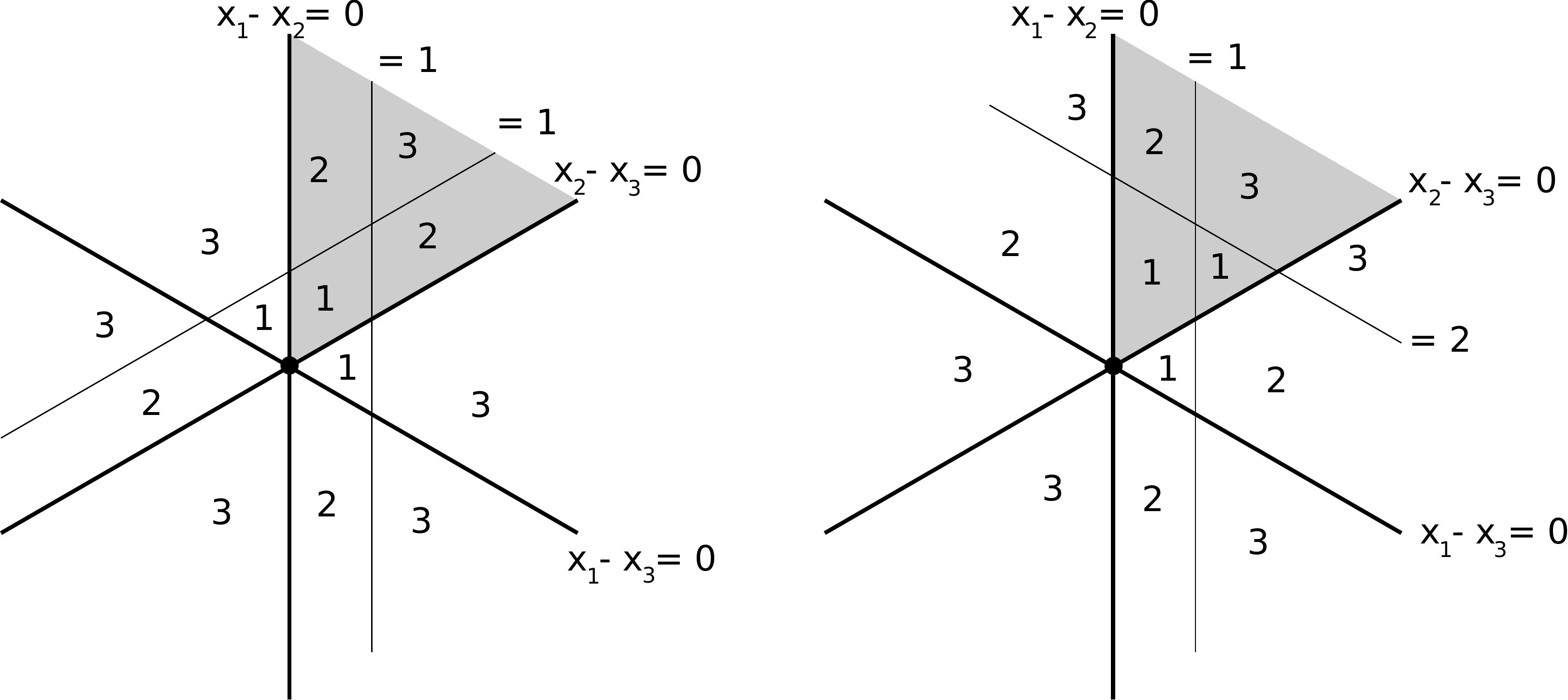}
\caption{The arrangements $\Shi(G)$ and $\Ish(G)$ for $G = 1 - 2 - 3$ labeled by degrees of freedom.}
\label{shiishrestrict}
\end{figure}

Given any hyperplane arrangement $\A$ in $\RR^n$ and any region $R$ of $\A$, the 
{\sf recession cone} $\Rec(R)$ of $R$ is
\begin{equation}
\Rec(R) := \{ v \in \RR^n \,:\, R + v \subseteq R \}.
\end{equation}
Since $R$ is convex, it follows that $\Rec(R)$ is a cone (i.e., closed under nonnegative
linear combinations).  If $\dim(\Rec(R)) = d$, we say that $R$ has 
{\sf $d$ degrees of freedom}.  Since $R$ is bounded if and only if $\Rec(R) = 0$, the degrees
of freedom statistic measures the failure of a region to be bounded.  

Figure~\ref{shiishrestrict} shows the regions of $\Shi(G)$ and $\Ish(G)$ labeled by the  
degrees of freedom statistic.  (Since the figure shows 2-dimension projections of 3-dimensional arrangements,
each region has an additional orthogonal degree of freedom.)

If $\A$ is any hyperplane arrangement in $\RR^n$ and $R$ is any region of $\A$, the hyperplanes
in $\A$ decompose the topological closure $\overline{R}$ into {\sf faces} of various dimensions.
A {\sf facet} of $R$ is a face of codimension $1$.  A non-linear hyperplane $H \in \A$ is a
{\sf ceiling} of $R$ if $H$ is the affine span of a facet of $R$ and $H$ does not separate $R$ from
the origin.

Let $\A$ be one of $\Shi(G)$ or $\Ish(G)$ and let $R$ be a region of $\A$.  The ceilings of $R$ 
can be used to define a set partition of $[n]$ called the {\sf ceiling partition} of $R$.  In particular,
if $\A = \Shi(G)$ (resp. $\Ish(G)$), the ceiling partition of $R$ is the set partition of $[n]$
generated by $i \sim j$ if $x_i - x_j = 1$ (resp. $x_1 - x_j = i$) is a ceiling of $R$.   
Since ceiling partitions are defined using affine hyperplanes, it follows
that 
every edge in the arc diagram of the ceiling partition of $R$ is contained in the graph $G$.

For example, if $R$ is the region of $\Shi(3)$ in Figure~\ref{shiishthree} marked $``A"$, then
the ceiling of $R$ is $x_1 - x_3 = 1$ and the ceiling partition is
$\{ \{1, 3\}, \{2\} \}$.  Also, if $R'$ is the region of $\Ish(3)$ in Figure~\ref{shiishthree}
marked $``B"$, then the ceilings of $R'$ are $x_1 - x_2 = 1$ and $x_1 - x_3 = 2$ and
the ceiling partition of $R'$ is $\{ \{1, 2, 3\} \}$.

Finally, recall that a region $R$ of a hyperplane arrangement containing $\Cox(n)$ is called
{\sf dominant} if the coordinate inequalities $x_1 > \dots > x_n$ are satisfied on $R$.
The dominant regions are shaded in Figures~\ref{shiishthree} and \ref{shiishrestrict}.
It is well known that the number of dominant regions of $\Shi(n)$ is the {\sf Catalan number}
$\Cat(n) = \frac{1}{n+1}{2n \choose n}$.  Armstrong proved that the number of dominant regions
of $\Ish(n)$ is also given by $\Cat(n)$.  More generally, Armstrong and Rhoades proved the 
following result \cite{AR}.

\begin{theorem} (Armstrong-Rhoades)
\label{dominance-result}
Let $G \subseteq {[n] \choose 2}$ be a graph on the vertex set $[n]$ and let $\Pi$ be a set
partition of $[n]$.  The arrangements $\Shi(G)$ and $\Ish(G)$ have the same number of 
total regions with ceiling partition $\Pi$ and dominant regions with ceiling partition $\Pi$.
\end{theorem}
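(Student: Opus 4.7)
The plan is to construct an explicit bijection $\phi$ from regions of $\Shi(G)$ to regions of $\Ish(G)$ that simultaneously preserves the ceiling partition and the property of being dominant; Theorem~\ref{dominance-result} then follows immediately by restricting $\phi$ to the fibers of each statistic. The approach is to label each side by a family of words encoding these statistics combinatorially: \emph{$G$-parking functions} on the Shi side and a new object, \emph{$G$-rook words}, on the Ish side.

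First, I would generalize the Pak--Stanley labeling to the deleted arrangements. Each region of $\Shi(G)$ receives a $G$-parking function whose $i$th entry counts the separating Shi hyperplanes of a certain form, and each region of $\Ish(G)$ receives a $G$-rook word by an analogous Ish recipe built from the Shi/Ish duality $x_i - x_j = 1 \leftrightarrow x_1 - x_j = i$. The two technical claims needed here are (i) that each labeling is a bijection onto its image class of words, and (ii) that the geometric ceiling partition of a region coincides with a purely combinatorial equivalence relation on positions of its label — roughly, indices $i,j$ lie in the same block if and only if the word records a coincidence along the edge $(i,j) \in G$ corresponding under duality to the ceiling hyperplane. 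Dominance of a region should translate in each case to a monotonicity (sorted/increasing) condition on the labeling word.

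Next, I would bijectively transform $G$-parking functions into $G$-rook words in a statistic-preserving way. The key tool is the \emph{cycle lemma} of enumerative combinatorics, applied to a sequence derived from the label; a canonical cyclic rotation passes from the Shi ``relative difference'' encoding to the Ish ``distance from the leader $x_1$'' encoding. This rotation must be chosen so that the edge-wise coincidence pattern supported on $G$ is preserved setwise, thereby transporting the ceiling partition; because it acts on a linearly ordered sequence in a natural way, it will also restrict to the sorted sublass and hence transport dominance.

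The main obstacle, I expect, is compatibility of the cycle-lemma step with the graph $G$. When $G \neq K_n$, only ceilings indexed by edges of $G$ are available, so the cycle rotation must be executed in a $G$-sensitive manner, and one must verify that ceiling partitions (whose arc diagrams are already forced to lie in $G$) are transported block-for-block rather than merely enumerated. Once this refined compatibility is established, the preservation of dominance is a comparatively routine check, since the cycle lemma interacts cleanly with the monotone characterization of dominant regions in both labelings.
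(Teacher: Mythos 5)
Your overall architecture does match the paper's bijective route (label Shi regions by parking functions, label Ish regions by rook words, connect the two word families by a cycle lemma that preserves position partitions), but two of your technical claims fail as stated. First, the Pak--Stanley labeling does not satisfy your claim (ii): the ceiling partition of a Shi region is not readable as a coincidence pattern of entries of its Pak--Stanley label. What is needed is the Athanasiadis--Linusson-type labeling $\omega$ of Proposition~\ref{parking-shi-characterization}, for which the ceiling partition of a region literally equals the position partition of its parking function and the arcs of that partition automatically lie in $G$; this is a different labeling, not a routine variant of Pak--Stanley, and your argument collapses without it. Second, on the Ish side the genuine work is to produce a labeling by rook words with the same property (ceiling partition $=$ position partition); the paper does this via valid rook placements on $\widehat{B}_n(G)$ (the map $\widehat{\rho}$) and the vertical-laser map $\lambda$, and your ``analogous Ish recipe built from duality'' is a placeholder for exactly this missing construction. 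By contrast, your worry about making the cycle-lemma step ``$G$-sensitive'' is unnecessary: the $\ZZ_{n+1}$-action adds a constant to all letters, so it preserves position partitions on the nose, and $G$-compatibility comes for free.

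The dominance step is also not the routine check you describe. Under $\omega$, dominance does not translate to a sorted/increasing word: dominant Shi regions correspond to words $w$ with $w_i \leq i$ in which each letter first occurs at the position equal to its value (for instance $121$ labels a dominant region of $\Shi(3)$ and is not weakly increasing), and a cyclic shift of letter values generally destroys such conditions, so ``the cycle lemma interacts cleanly with the monotone characterization'' is an assertion, not an argument. The actual reason the paper's composite preserves dominance is special: a dominant Ish region produces a rook word that is componentwise $\leq (1,2,\dots,n)$, hence is already a parking function and is therefore a \emph{fixed point} of the cycle map $\beta$, after which one checks the resulting Shi ceiling diagram has identity permutation. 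That this compatibility is delicate is shown by the paper's own observation that the prime cycle map $\beta'$ does \emph{not} preserve dominance ($\Shi(3)$ has $2$ dominant relatively bounded regions while $\Ish(3)$ has $3$), so this point must be proved rather than expected.
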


It may be tempting to guess that Theorem~\ref{dominance-result} generalizes to 
state that for any permutation 
$\pi \in \symm_n$ and any 
set partition $\Pi$ of $[n]$, the arrangement $\Shi(G)$ and $\Ish(G)$ have the same number of 
regions $R$ on which the coordinate inequalities $x_{\pi_1} > \dots > x_{\pi_n}$ are satisfied
and have ceiling partition $\Pi$.  However, this generalization is false.  Indeed, there are $2$ regions
of $\Shi(3)$ and $3$ regions of $\Ish(3)$ which satisfy $x_1 > x_3 > x_2$.

Theorem~\ref{dominance-result} was proven by labeling the regions of $\Ish(G)$ with
certain combinatorial objects called ``Ish ceiling diagrams" and relating this labeling to 
(a slight modification of) a labeling of $\Shi(G)$ due to Athanasiadis and Linusson \cite{ALShi}.
Using these labels, one derives an explicit product formula for the number of regions of
$\Shi(G)$ or $\Ish(G)$ with a fixed ceiling partition.  While enumerative, this proof is not bijective.

\begin{problem}
\label{dominance-problem} (Armstrong-Rhoades)
Let $G \subseteq {[n] \choose 2}$ be a graph on the vertex set $[n]$.  Give a bijection between
the regions of $\Shi(G)$ and $\Ish(G)$ which preserves ceiling partitions and dominance.
\end{problem}

By combining degrees of freedom and ceiling partitions, one obtains a different equidistribution result.

\begin{theorem} (Armstrong-Rhoades)
\label{freedom-result}
Let $G \subseteq {[n] \choose 2}$ be a graph on the vertex set $[n]$ and let $\Pi$ be a set
partition of $[n]$.  Fix $d \geq 1$.  The arrangements $\Shi(G)$ and $\Ish(G)$ have the same number
of regions with ceiling partition $\Pi$ and $d$ degrees of freedom.  
\end{theorem}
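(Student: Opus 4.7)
The plan is to upgrade the bijection used to prove Theorem \ref{dominance-result} so that it simultaneously tracks the degrees of freedom statistic. As suggested by the abstract, the regions of $\Shi(G)$ and $\Ish(G)$ are labeled by combinatorial objects (Athanasiadis--Linusson-style labels on the Shi side, Ish ceiling diagrams refined via \emph{rook words} on the Ish side), and the cycle-lemma bijection between these labels preserves the ceiling partition $\Pi$. The goal is to verify that it also preserves the dimension of the recession cone.

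The first step is to obtain a combinatorial formula for $\dim \Rec(R)$ for a region $R$ of $\Shi(G)$ or $\Ish(G)$. Since both arrangements refine $\Cox(n)$, the region $R$ sits inside a unique open Weyl chamber $\{x_{\pi_1} > \cdots > x_{\pi_n}\}$, so $\Rec(R)$ is a face of its closure. I would argue that this face is cut out precisely by the asymptotic constraints coming from the affine hyperplanes of $\Shi(G)$ or $\Ish(G)$ that bound $R$ from above: each such ``upper'' hyperplane forces an equality $v_i = v_j$ on $v \in \Rec(R)$, which collapses an entire consecutive block of $\pi$. The degrees of freedom of $R$ then equals the number of resulting blocks of $[n]$. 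I would carry out this analysis separately for $\Shi(G)$ (where the relevant ceilings are $x_i - x_j = 1$) and for $\Ish(G)$ (where they are $x_1 - x_j = i$), so as to produce explicit formulas in terms of $\pi$, the ceiling partition $\Pi$, and the graph $G$.

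The second step is to read these formulas off of the combinatorial labels. On the Shi side, the number of recession blocks should be visible directly from the Athanasiadis--Linusson label encoding $R$. On the Ish side, since every non-linear Ish hyperplane involves the coordinate $x_1$, the recession behavior should be captured by the positions and values of entries in the rook word associated with $R$. Once the statistic is expressed in each language, I would revisit the cycle-lemma step in the Shi/Ish bijection and check that the cyclic rotation underlying it preserves the block count; this is plausible because cyclic rotations of rook-type words preserve natural ``ascent-run'' statistics that typically govern recession behavior.

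The main obstacle, as I see it, is the structural asymmetry between Shi and Ish ceilings: the Shi ceilings are ``local'' (they couple arbitrary pairs $x_i, x_j$), whereas the Ish ceilings are ``distinguished'' (they all couple $x_1$ to some $x_j$). Consequently the combinatorial descriptions of $\dim \Rec(R)$ will look quite different on the two sides, even though they must agree in count. Showing that the cycle-lemma bijection matches these differently-shaped statistics, uniformly over all graphs $G \subseteq {[n] \choose 2}$ and all partitions $\Pi$, is where the real work will lie; I expect the argument to reduce, after careful bookkeeping, to a single identity about which letters of a rook word can be cyclically rotated without changing the induced block decomposition of $[n]$.
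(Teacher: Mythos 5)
Your first step---expressing $\dim \Rec(R)$ combinatorially---is sound, and it is exactly what Propositions~\ref{shi-ceiling-diagram-characterization} and~\ref{ish-ceiling-diagram-characterization} supply: a Shi region has as many degrees of freedom as the number of connected components of the partition $\Pi$ in its ceiling diagram, and an Ish region with ceiling diagram $(\pi,\epsilon)$ has $n-k+\pi^{-1}_1$ degrees of freedom, where $k$ is the last dotted position. The gap is in your second step: the cycle-lemma bijection $\omega\circ\beta\circ\lambda\circ\widehat{\rho}$ behind Theorem~\ref{dominance-result} simply does not preserve degrees of freedom, so the verification you propose cannot succeed. The paper's own running example refutes it: the Ish region with $\pi=41738562$, $\epsilon=00120350$ has $8-7+2=3$ degrees of freedom, while its image under the dominance-preserving bijection is the Shi region with ceiling diagram $\pi=23415786$, $\Pi=\{\{1,2,5,8\},\{3\},\{4,6\},\{7\}\}$; since that $\Pi$ is connected, the image has only $1$ degree of freedom. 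There is also a structural obstruction to any repair along these lines: the cycle-lemma bijection preserves dominance, so if it (or any dominance-preserving variant) also preserved degrees of freedom, then $\Shi(G)$ and $\Ish(G)$ would have equally many \emph{dominant} regions with given ceiling partition and degrees of freedom---which is false already for $n=3$, where $\Shi(3)$ has $2$ dominant regions with one degree of freedom and $\Ish(3)$ has $3$. The underlying reason is that, via $\omega$ and Lemma~\ref{prime-dyck}, the Shi degrees-of-freedom statistic equals the number of prime components of the parking function viewed as a labeled Dyck path, and this quantity is neither an invariant of the $\ZZ_{n+1}$-action nor determined by the position partition, so the cycle map $\beta$ cannot carry it; your hope that cyclic rotation preserves the relevant ``block count'' is exactly what fails.

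Accordingly, the paper does not prove Theorem~\ref{freedom-result} by upgrading the cycle lemma at all: it builds a separate and considerably more involved bijection $\gamma$ (Section~\ref{Freedom section}) whose inverse $\delta$ assembles an Ish ceiling diagram directly from the prime components of the labeled Dyck path, treating the component containing the label $1$ specially and recording a cycle index that rearranges whole prime components rather than individual letters of a word. Any rescue of your plan would have to replace the word-level cyclic action by a mechanism acting on prime components---which is essentially what $\gamma$ does---so, as written, the proposal does not yield a proof of the theorem.
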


The reader may wonder whether the
statements of Theorems~\ref{dominance-result} and \ref{freedom-result}
could be unified to 
state that $\Shi(G)$ and $\Ish(G)$ have the same number of dominant regions with a fixed
ceiling partition and degrees of freedom.  However, this statement is also false.  For example,
there are $2$ dominant regions of $\Shi(3)$ and $3$ dominant regions of $\Ish(3)$ with
$1$ degree of freedom.
As with Theorem~\ref{dominance-result}, the proof of Theorem~\ref{freedom-result}
is enumerative, but not bijective.

\begin{problem}
\label{freedom-problem}
(Armstrong-Rhoades)
Let $G \subseteq {[n] \choose 2}$ be a graph on the vertex set $[n]$.  Give a bijection between
the regions of $\Shi(G)$ and $\Ish(G)$ which preserves ceiling partitions and degrees
of freedom.
\end{problem}

In this paper we will give bijections which solve Problems~\ref{basic-problem}, 
\ref{dominance-problem}, and \ref{freedom-problem}.  
The maps which accomplish this are given in the following (noncommutative) diagram,
where all arrows are bijections (and we postpone the definitions of the maps and
intermediate objects until later sections). 
The middle composition $\omega \circ \alpha \circ \rho$ will solve Problem~\ref{basic-problem},
the bottom composition $\omega \circ \beta \circ \lambda \circ \widehat{\rho}$ will
induce a bijection (for an arbitrary subgraph $G$) solving Problem~\ref{dominance-problem}, and
the top composition $\omega \circ \gamma$ will induce a bijection solving Problem~\ref{freedom-problem}.
(Of course, a bijection solving either Problem~\ref{dominance-problem} or 
Problem~\ref{freedom-problem} automatically solves Problem~\ref{basic-problem}.) 

\begin{center}
\includegraphics[scale = 0.5]{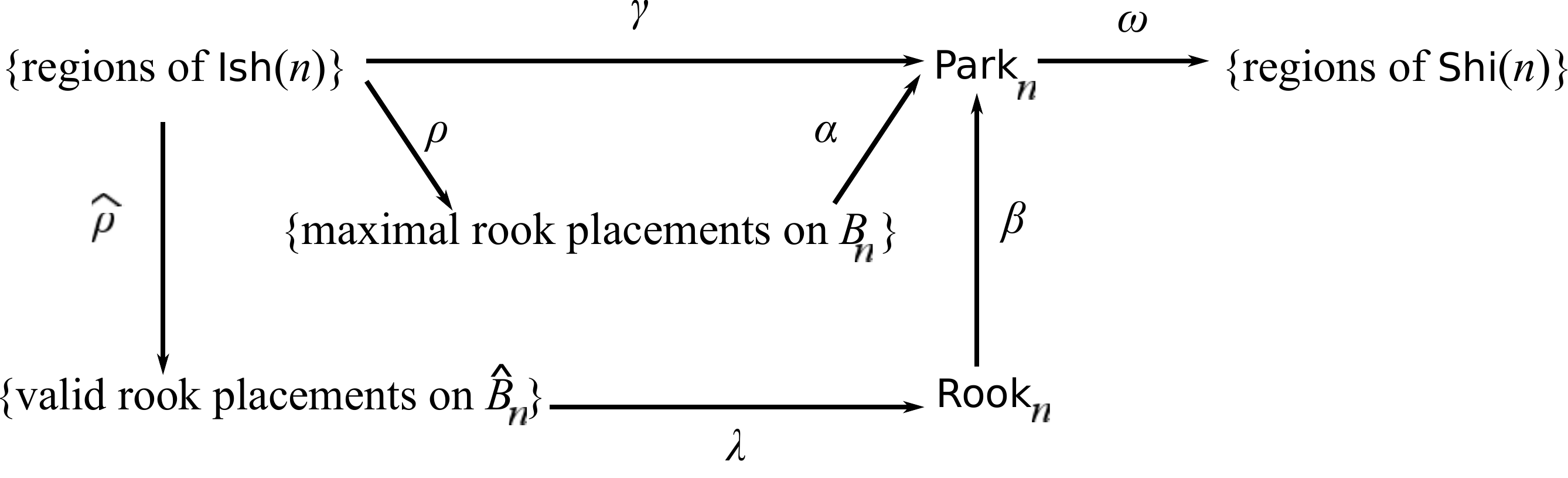}
\end{center}

The map $\omega: \Park_n \xrightarrow{\sim} \{$regions of $\Shi(n)\}$ is (the inverse of) a labeling of 
regions of the Shi arrangement by parking functions.  This is a minor modification of a labeling due
to Athanasiadis and Linusson \cite{ALShi}.

The maps $\rho$ and $\widehat{\rho}$ give closely related ways to encode regions of 
$\Ish(n)$ in terms of rook placements on certain boards $B_n$ and 
$\widehat{B}_n$.  
For any subgraph $G \subseteq {[n] \choose 2}$, there are ``restrictions" of $\rho$ and $\widehat{\rho}$
which encode the regions of $\Ish(G)$ as rook placements on certain
subboards $B_n(G)$ and $\widehat{B}_n(G)$.
 When the subboard $B_n(G)$
is rook-equivalent to a Ferrers board, one can use a product formula
of Goldman, Joichi, and White \cite{GJW} to obtain an expression for the number 
of regions of $\Ish(G)$.

The map $\alpha$ associates a parking function to any maximal rook placement on $B_n$.
While this map is simple to define, it is the most destructive bijection in the diagram and 
does not preserve any of the statistics of interest.

The set $\Rook_n$ of ``rook words of size $n$" is our new Ish analog of parking functions.
The composition $\lambda \circ \widehat{\rho}$ labels Ish regions with rook words just as 
$\omega^{-1}$ labels Shi regions with parking functions.  We will see that every orbit in the 
action of $\ZZ_{n+1}$ on the set of words $[n+1]^n$ contains a unique rook word and
a unique parking function.
``Cycle lemma" results of this kind are ubiquitous in combinatorics.  We have a 
canonical bijection $\beta: \Rook_n \rightarrow \Park_n$ obtained by sending a rook word
to the unique parking function in its orbit.

In addition to solving Problem~\ref{dominance-problem},
our cycle lemma  method  gives combinatorial intuition as to why the 
Shi/Ish duality $x_i - x_j = 1 \longleftrightarrow x_1 - x_j = i$ preserves so much information.  Namely,
we have the following heuristic.

\begin{quote}
``Shi/Ish duality consists of making two choices of distinguished orbit representatives in
the action of $\ZZ_{n+1}$ on $[n+1]^n$."
\end{quote}

The map $\gamma$ which solves Problem~\ref{freedom-problem} is the most 
complicated bijection in the diagram and labels Ish regions by parking functions, thought
of as labeled Dyck paths.

The remainder of the paper is organized as follows.  
In {\bf Section~\ref{Background}} we recall basic definitions related to words and set partitions
and explain how to label Shi and Ish regions by Shi and Ish ceiling diagrams.
In {\bf Section~\ref{Parking functions and rook words}} we recall the Athanasiadis-Linusson
labeling $\omega^{-1}$ of regions of $\Shi(n)$ by parking functions, introduce rook words, and give
the Cycle Lemma bijection $\beta: \Rook_n \rightarrow \Park_n$.
In {\bf Section~\ref{Rook placements and the Ish arrangement}}
we describe the maps $\rho$ and $\widehat{\rho}$ which label regions of the Ish arrangement
by rook placements.
In {\bf Section~\ref{A bijection between the regions}} we 
describe the map $\alpha$ which completes the bijection
$\omega \circ \alpha \circ \rho$ that solves Problem~\ref{basic-problem}.
In {\bf Section~\ref{Dominance section}},
we describe the map $\lambda$ that completes the bijection
$\omega \circ \beta \circ \lambda \circ \widehat{\rho}$ which solves Problem~\ref{dominance-problem}.
In {\bf Section~\ref{Bounded section}}, we
 use a prime
version of our Cycle Lemma to give a bijection
between the relatively bounded regions of Shi and Ish which preserves
ceiling partitions.
In {\bf Section~\ref{Freedom section}},
we describe the map $\gamma$ that completes the bijection $\omega \circ \gamma$ 
which solves Problem~\ref{freedom-problem}.
We close in {\bf Section~\ref{Closing remarks}} with a proposed improvement
on our solution to Problem~\ref{freedom-problem}.

\section{Background}
\label{Background}

\begin{figure}
\centering
\includegraphics[scale = 0.6]{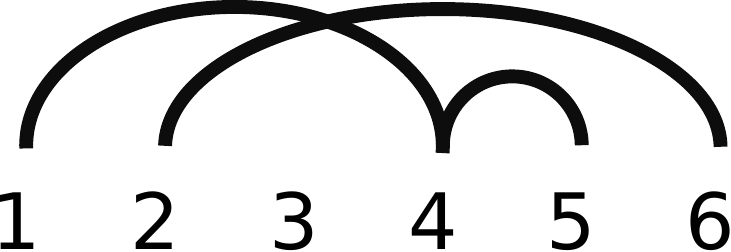}
\caption{The arc diagram of a set partition of $[6]$.}
\label{setpartition}
\end{figure}

\subsection{Set partitions and arc diagrams}

Given a set partition $\Pi$ of $[n]$, the {\sf arc diagram} of $\Pi$ is obtained by 
drawing the numbers $1, 2, \dots, n$ in a line and drawing an arc between $i$ and $j$
if $i < j$ and there exists a block $B$ of $\Pi$ such that $i$ and $j$ are consecutive elements of $B$.
The arc diagram of the set partition $\{ \{1, 4, 5\}, \{ 2, 6 \}, \{ 3 \} \}$ of $[6]$ is shown in Figure~\ref{setpartition}.
We will refer to arcs in the arc diagram of a set partition $\Pi$ as the ``arcs of $\Pi$".
If $\Pi$ is a set partition of $[n]$ and $\pi \in \symm_n$ is a permutation,
we let $\pi(\Pi)$ denote the set partition of $[n]$ whose blocks are
$\{ \pi(B) \,:\, B \in \Pi\}$.

A set partition $\Pi$ of $[n]$ is called {\sf nonnesting} if there do not exist 
indices $1 \leq a < b < c < d \leq n$ such that $a - d$ and $b - c$ are both arcs 
of $\Pi$.  
That is, the partition $\Pi$ is nonnesting if and only if there are no pairs of nested arcs of $\Pi$.
The set partition in Figure~\ref{setpartition}  is not nonnesting, but the set partition
$\{ \{1, 3\},  \{2, 4\} \}$ of $[4]$ is nonnesting.
There are $\Cat(n)$ nonnesting partitions of $[n]$.

To study the degrees of freedom statistic, it will be useful to break up set partitions 
into connected components.  
If $\Pi$ is a set partition of $[n]$, there exists a unique finest partition of $[n]$ of the form
$\Sigma = \{ \{1, 2, \dots, i_1\}, \{i_1 + 1, i_1 + 2, \dots, i_2 \}, \dots, \{i_{d-1} + 1, i_{d-1} + 2, \dots, n \} \}$
(for some indices $1 \leq i_1 < i_2 < \dots < i_{d-1} < n$) such that $\Pi$ refines $\Sigma$.
We call the restrictions of $\Pi$ to the blocks of $\Sigma$ the {\sf connected components} 
of $\Pi$.
The partition $\Pi$ is said to be {\sf connected} if it has a single connected component.
For example, the partition shown in Figure~\ref{setpartition} is connected and the set partition
$\{ \{1, 3\}, \{2 \}, \{4, 5, 6 \}, \{7 \} \}$ has $3$ connected components.

\subsection{Shi ceiling diagrams}

\begin{figure}
\centering
\includegraphics[scale = 0.6]{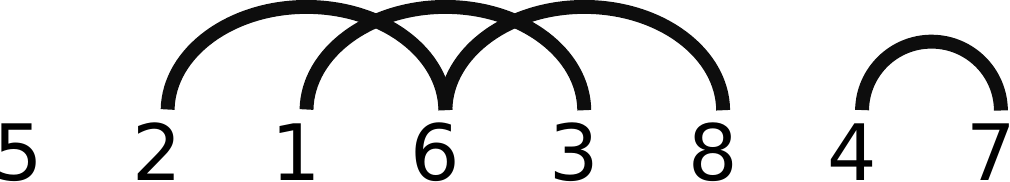}
\caption{A Shi ceiling diagram.}
\label{shidiagram}
\end{figure}

Let $G \subseteq {[n] \choose 2}$ be a graph.
We recall a labeling of the regions of $\Shi(G)$ appearing in \cite{AR} 
(which is a minor modification of a labeling of the regions of 
$\Shi(G)$ due to Athanasiadis and Linusson \cite{ALShi}).

Let $R$ be a region of $\Shi(G)$.  We associate to $R$ a pair $(\pi, \Pi)$ as follows, where
$\pi \in \symm_n$ is a permutation and $\Pi$ is a nonnesting partition of $[n]$.
We let $\pi = \pi_1 \dots \pi_n$ be the unique element of $\symm_n$ such that 
the coordinate inequalities $x_{\pi_1} > \dots > x_{\pi_n}$ hold on $R$.  We let $\Pi$ be 
the set partition of $[n]$ generated by $i \sim j$ whenever $x_{\pi_i} - x_{\pi_j} = 1$
is a ceiling of the region $R$.  The partition $\Pi$ is necessarily nonnesting.
We call the pair $(\pi, \Pi)$ the {\sf Shi ceiling diagram} of $R$ and visualize it by writing the 
one-line notation $\pi_1 \dots \pi_n$ from left to right and drawing an arc between 
$\pi_i$ and $\pi_j$ whenever $i \sim j$ in $\Pi$.

Figure~\ref{shidiagram} shows an example of a Shi ceiling diagram when $n = 8$.  The pair 
$(\pi, \Pi)$ is given by $\pi = 52163847 \in \symm_8$ and
$\Pi = \{ \{1\}, \{2, 4, 6\}, \{3, 5\}, \{7, 8\} \}$.  The ceilings of the region $R$ corresponding to
this diagram are $x_2 - x_6 = 1$, $x_1 - x_3  = 1$, $x_6 - x_8 = 1$, and $x_4 - x_7 = 1$.
The ceiling partition of $R$ is therefore
$\{ \{1, 3\}, \{2, 6, 8\}, \{4, 7\}, \{5\} \}$.  Observe that this ceiling partition is the image of $\pi(\Pi)$ of the 
set partition $\Pi$ under the permutation $\pi$.  The left of Figure~\ref{shiishthree} shows
the regions of the full Shi arrangement $\Shi(3)$ labeled by their Shi ceiling diagrams.

The following proposition characterizes which pairs $(\pi, \Pi)$ arise as ceiling diagrams for Shi regions.

\begin{proposition}
\label{shi-ceiling-diagram-characterization}
(Armstrong-Rhoades, Athanasiadis-Linusson)  The map $R \mapsto (\pi, \Pi)$ associating a region
of $\Shi(G)$ to its Shi ceiling diagram bijects regions of $\Shi(G)$ with pairs 
$(\pi, \Pi)$ where
\begin{itemize}
\item $\pi$ is a permutation in $\symm_n$ and $\Pi$ is a nonnesting set partition of $[n]$,
\item for every block $B = \{ b_1 < \dots < b_k \}$ of $\Pi$ we have
$\pi_{b_1} < \dots < \pi_{b_k}$, and
\item for every block $B = \{ b_1 < \dots < b_k \}$ of $\Pi$ and every $1 \leq i \leq k-1$, we have
that $(\pi_{b_i} < \pi_{b_{i+1}})$ is an edge in $G$.
\end{itemize}
Moreover, if $(\pi, \Pi)$ is the Shi ceiling diagram of a region $R$ of $\Shi(G)$ and $\Pi$ 
has $d$ connected components, then $R$ has $d$ degrees of freedom.  Also, the 
ceiling partition of $R$ is the image $\pi(\Pi)$ of the partition $\Pi$ under the permutation $\pi$.
\end{proposition}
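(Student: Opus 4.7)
The plan is to verify that the map $R \mapsto (\pi, \Pi)$ is well-defined, injective, and surjective onto the pairs satisfying the three bullets, and then to extract the degrees-of-freedom and ceiling-partition statements from the combinatorial data. For well-definedness, $\pi$ is forced by the coordinate ordering on $R$, and the two straightforward bullets are immediate: if $i \sim j$ is an arc of $\Pi$ coming from a ceiling $x_{\pi_i} - x_{\pi_j} = 1$, then this hyperplane must lie in $\Shi(G)$, forcing $\pi_i < \pi_j$ and $(\pi_i, \pi_j) \in G$. The substantive point is that $\Pi$ is nonnesting. I would argue this geometrically: if arcs $a$--$d$ and $b$--$c$ with $a < b < c < d$ were both ceilings, the coordinate inequalities $x_{\pi_a} > x_{\pi_b} > x_{\pi_c} > x_{\pi_d}$ combined with the two ceiling equations (on a facet of $R$) would squeeze the inner ceiling into a codimension-two locus, contradicting its status as a codimension-one facet.

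For injectivity, the pair $(\pi, \Pi)$ recovers $R$ by dictating the sign of $x_a - x_b - 1$ for each non-linear hyperplane of $\Shi(G)$: $\pi$ gives the ordering of coordinates, and whether a given pair of positions lies under an arc of $\Pi$ determines which side of the associated hyperplane $R$ occupies. For surjectivity, given a valid $(\pi, \Pi)$ I would exhibit a concrete point of the putative region by defining coordinates from right to left in the one-line notation of $\pi$: start at $x_{\pi_n} = 0$ and, moving leftward, add a small increment strictly less than $1$ when the next index is joined to its right neighbor by an arc of $\Pi$, and a large increment exceeding the total height of any remaining chain of arcs when it opens a new connected component. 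The nonnesting condition is precisely what makes this recursion compatible, since it prevents any single coordinate from receiving two conflicting demands.

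The degrees-of-freedom assertion reduces to a recession-cone computation: a vector $v \in \Rec(R)$ must satisfy $v_{\pi_1} \geq \cdots \geq v_{\pi_n}$ from Coxeter hyperplanes and $v_{\pi_i} \leq v_{\pi_j}$ for each arc $i \sim j$ of $\Pi$ from the ceilings, so within each connected component of $\Pi$ these combined inequalities collapse all $v_{\pi_i}$ to a common value while distinct components are coupled only by the one-sided Coxeter inequalities. Thus $\dim \Rec(R)$ equals the number of connected components of $\Pi$. The ceiling-partition formula $\pi(\Pi)$ is immediate from the definitions, since the arcs of $\Pi$ generate ceilings $x_{\pi_{b_i}} - x_{\pi_{b_{i+1}}} = 1$, which in turn generate the relations $\pi_{b_i} \sim \pi_{b_{i+1}}$ in the ceiling partition. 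The main obstacle I anticipate is the surjectivity construction: one must verify that the constructed point produces \emph{exactly} the ceiling set encoded by $\Pi$, with no extraneous ceilings arising from non-arc pairs and none of the required ceilings merely latent, which demands careful bookkeeping that the nonnesting condition is specifically designed to enable.
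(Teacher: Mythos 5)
First, a point of comparison: the paper does not prove this proposition at all --- it is quoted from Armstrong--Rhoades \cite{AR} and Athanasiadis--Linusson \cite{ALShi} --- so your argument can only be judged on its own terms. Your overall plan (well-definedness, injectivity, surjectivity via an explicit point, then the recession cone and the ceiling partition) has the right shape, and your squeeze argument for nonnesting is correct: at a closure point of the facet lying in $x_{\pi_b}-x_{\pi_c}=1$ one gets $x_{\pi_a}-x_{\pi_d}=1$ and hence $x_{\pi_a}=x_{\pi_b}$, $x_{\pi_c}=x_{\pi_d}$, contradicting codimension one. But there are genuine gaps. (1) You tacitly assume that the set of ceiling pairs of $R$ coincides with the arc set of the partition $\Pi$ they generate. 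This needs the additional (same-flavor) argument that two ceilings cannot share a left position or a right position: if $x_{\pi_i}-x_{\pi_j}=1$ and $x_{\pi_i}-x_{\pi_k}=1$ were both ceilings with $i<j<k$, the facet in the first would be forced into $x_{\pi_j}=x_{\pi_k}$. Without this, the second and third bullets are not established for arcs of $\Pi$ that are not themselves ceilings (e.g.\ ceilings $1\sim 3$ and $1\sim 5$ would generate the arc $3\sim 5$, about which your "immediate" argument says nothing). (2) Both your injectivity argument and your recession-cone computation rest on the unproved claim that on $R$ a pair $i<j$ satisfies $x_{\pi_i}-x_{\pi_j}<1$ exactly when $(i,j)$ lies weakly under an arc of $\Pi$; the direction "if $x_{\pi_i}-x_{\pi_j}<1$ on $R$ then some covering pair is a ceiling" is a facet-existence (non-redundancy) statement that requires proof, and it is precisely what rules out cross-component couplings in the recession cone.

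(3) The surjectivity recipe as written does not work. Your increments are keyed to whether consecutive positions are joined by an arc (or open a new component), but the actual constraints are on \emph{sums} of increments: the difference across an interval covered by a single arc must stay below $1$, while the difference across an interval covered by no arc must exceed $1$, and "each increment strictly less than $1$" controls neither. Concretely, for $\Pi$ with arcs $1\sim 3$ and $2\sim 6$ in $\symm_6$ one needs $x_{\pi_1}-x_{\pi_3}<1$ and $x_{\pi_2}-x_{\pi_6}<1$ but $x_{\pi_1}-x_{\pi_4}>1$, which forces the gap $x_{\pi_1}-x_{\pi_2}$ to be close to $1$ while the later gaps are tiny; no uniform small/large rule based on adjacency or component boundaries produces this, and with arcs $1\sim 3$, $3\sim 5$ a poor choice of "small" increments even lands in a region whose ceiling is $x_{\pi_1}-x_{\pi_5}=1$ rather than the two prescribed ones. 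One must instead calibrate the gaps so that for \emph{every} pair the difference is $<1$ precisely when the pair is weakly under an arc (this is where nonnesting and the increasing-block condition enter), and then still verify that in the resulting region of $\Shi(G)$ each arc hyperplane genuinely spans a facet and no extraneous ceilings occur --- the step you yourself flag as the main obstacle but do not carry out. As it stands the proposal is a reasonable outline, not yet a proof.
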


For example, the Shi ceiling diagram in Figure~\ref{shidiagram} labels a region of 
$\Shi(G)$ if and only if $G$ contains the edges $1 < 3$, $2 < 6$, $4 < 7$, and $6 < 8$.  In this case,
the region labeled by this diagram has $3$ degrees of freedom.

Athanasiadis and Linusson considered a version of Shi ceiling diagrams where ``floors" are used
instead of ceilings.  The advantage of our labeling is that it makes the degrees of freedom 
statistic more visible.  In what follows, we will identify Shi regions with their Shi ceiling diagrams.
This identification gives rise to an embedding 
$\{$regions of $\Shi(G)\} \hookrightarrow \{$regions of $\Shi(n) \}$.  We will consider regions 
of $\Shi(G)$ as regions of $\Shi(n)$ without comment.

\subsection{Ish ceiling diagrams}  
Let $G \subseteq {[n] \choose 2}$ be a graph.  While regions 
of $\Shi(G)$ are labeled by permutations decorated with arcs, regions of $\Ish(G)$ are labeled
by permutations decorated with dots.

\begin{figure}
\centering
\includegraphics[scale = 0.6]{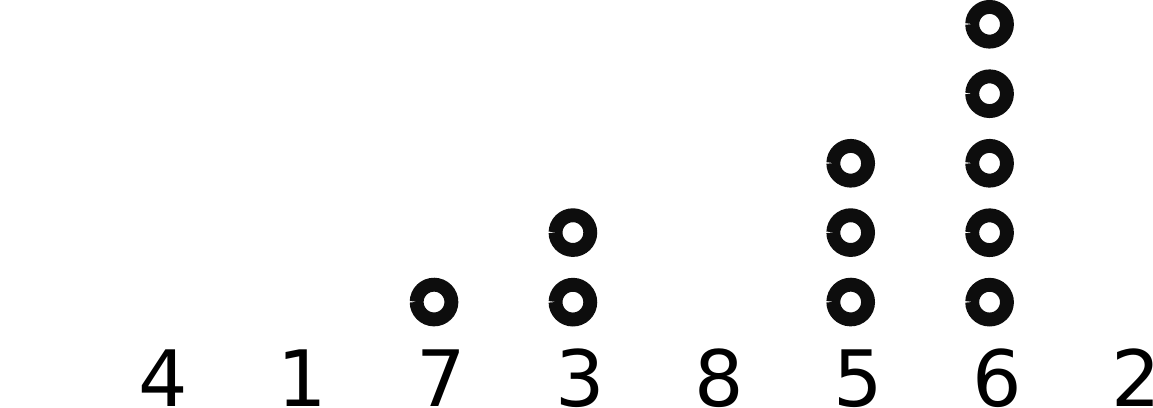}
\caption{An Ish ceiling diagram.}
\label{ishdiagram}
\end{figure}

Let $R$ be a region of $\Ish(G)$.  We associate to $R$ a pair $(\pi, \epsilon)$ as follows,
where $\pi \in \symm_n$ is a permutation and $\epsilon \in \ZZ_{\geq 0}^n$ is a length $n$ sequence
of nonnegative integers.  We let $\pi = \pi_1 \dots \pi_n$ be the unique permutation such that
the coordinate inequalities $x_{\pi_1} > \dots > x_{\pi_n}$ hold on $R$.  We define 
$\epsilon = \epsilon_1 \dots \epsilon_n$ by $\epsilon_j = i$ if $x_1 - x_{\pi_j} = i$ is a ceiling
of the region $R$  and $\epsilon_j = 0$ otherwise.  The pair $(\pi, \epsilon)$ is the 
{\sf Ish ceiling diagram} of $R$ and is visualized by drawing the one-line notation
$\pi = \pi_1 \dots \pi_n$ and placing $\epsilon_j$ dots on top of $\pi_j$ for all $1 \leq j \leq n$.

Figure~\ref{ishdiagram} gives an example of an Ish ceiling diagram in the case $n = 8$.  
We have that $(\pi, \epsilon)$ is given by
$\pi = 41738562 \in \symm_8$ and $\epsilon = 00120350$.  The ceilings of the region $R$ corresponding
to this diagram are $x_1 - x_7 = 1$, $x_1 - x_3 = 2$, $x_1 - x_5 = 3$, and $x_1 - x_6 = 5$.
The ceiling partition of $R$ is the set partition 
$\{ \{1, 7 \}, \{2, 3, 5, 6\}, \{4\}, \{8\} \}$.  The right of Figure~\ref{shiishthree} shows the regions of the
full Ish arrangement $\Ish(3)$ labeled by their Ish ceiling diagrams.

The Ish analog of Proposition~\ref{shi-ceiling-diagram-characterization} is as follows.  Roughly 
speaking, regions of $\Ish(G)$ are labeled by permutations $\pi = \pi_1 \dots \pi_n$ 
decorated by dot accumulation sequences
$\epsilon = \epsilon_1 \dots \epsilon_n$ such that at most $i-1$ dots are on top of $i$,
all positive dot accumulations appear to the right of $1$ and are strictly increasing from
left to right, and whenever we have  a positive dot accumulation of $i$ dots on top of $j$, 
we have that $(i < j)$ is an edge in $G$.

\begin{proposition}
\label{ish-ceiling-diagram-characterization} (Armstrong-Rhoades)
The map $R \mapsto (\pi, \epsilon)$ associating a region of $\Ish(G)$ to its Ish ceiling diagram
bijects regions of $\Ish(G)$ with pairs $(\pi, \epsilon)$ where
\begin{itemize}
\item $\pi$ is a permutation in $\symm_n$ and $\epsilon$ is a length $n$ sequence of nonnegative
integers,
\item if $i < j$ and $\epsilon_i, \epsilon_j > 0$, then $\epsilon_i < \epsilon_j$,
\item if $\epsilon_i > 0$, then $\pi^{-1}_1 < i$,
\item $\epsilon_i < \pi_i$ for $1 \leq i \leq n$, and
\item if $\epsilon_i > 0$, we have that $(\epsilon_i < \pi_i)$ is an edge in $G$.
\end{itemize}
Moreover, if $(\pi, \epsilon)$ is the Ish ceiling diagram associated to a region $R$ of
$\Ish(G)$ and $k$ is the largest index $1 \leq k \leq n$ such that $\epsilon_k > 0$,
then $R$ has $n - k + \pi^{-1}_1$ degrees of freedom (where we take $k = \pi^{-1}_1$ if $\epsilon$
is the zero sequence).  Also, the ceiling partition of $R$ is the set partition of $[n]$ generated
by $\epsilon_i \sim \pi_i$, where $1 \leq i \leq n$ is such that $\epsilon_i > 0$.
\end{proposition}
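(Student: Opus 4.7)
The plan is to establish the map $R \mapsto (\pi, \epsilon)$ as a bijection onto the indicated target set and then deduce the degrees-of-freedom and ceiling-partition claims directly.

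First I verify the five conditions are necessary. The permutation $\pi$ comes from the Coxeter chamber containing $R$; the bound $\epsilon_i < \pi_i$ holds because the Ish hyperplanes of the form $x_1 - x_{\pi_i} = c$ exist only for $1 \leq c \leq \pi_i - 1$; and the edge condition is built into the definition of $\Ish(G)$. The implication $\epsilon_i > 0 \Rightarrow \pi^{-1}_1 < i$ follows because a facet of $R$ supported on $x_1 - x_{\pi_i} = \epsilon_i \geq 1$ must lie in the half-space $x_1 - x_{\pi_i} < \epsilon_i$ (to avoid separating $R$ from the origin), and the intervening Coxeter hyperplane $x_1 = x_{\pi_i}$ would isolate $R$ from this facet unless $x_1 > x_{\pi_i}$ on $R$. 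Strict monotonicity of positive $\epsilon$'s is obtained by passing to the closure $\overline{R}$: on the ceiling $x_1 - x_{\pi_i} = \epsilon_i$, the Coxeter inequality $x_{\pi_i} \geq x_{\pi_j}$ for $i < j$ gives $x_1 - x_{\pi_j} \geq \epsilon_i$, and combining with the open bound $x_1 - x_{\pi_j} < \epsilon_j$ on $R$ forces $\epsilon_i < \epsilon_j$.

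The main task is to reconstruct $R$ uniquely from a valid pair $(\pi, \epsilon)$. For each $i$ with $\epsilon_i > 0$, the interval of $x_1 - x_{\pi_i}$ is pinned to $(\epsilon_i - 1, \epsilon_i)$. For positions $i$ with $\epsilon_i = 0$ and $i > \pi^{-1}_1$, let $c_i \in \{0, 1, \ldots, \pi_i - 1\}$ index the interval $x_1 - x_{\pi_i} \in (c_i, c_i + 1)$, where $c_i = \pi_i - 1$ corresponds to the top interval $(c_i, \infty)$. The condition $\epsilon_i = 0$ means the hyperplane $x_1 - x_{\pi_i} = c_i + 1$ does not bound a facet of $R$, which happens precisely when either $c_i = \pi_i - 1$ (no hyperplane above) or $c_k = c_i$ for some $k > i$ (then the Coxeter inequality $x_{\pi_i} \geq x_{\pi_k}$ would fail on that facet). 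I will determine $c_i$ right-to-left as the maximum candidate in $\{\pi_i - 1\} \cup \{c_k : k > i\}$ consistent with the Coxeter-derived monotonicity $c_{i-1} \leq c_i$ (when the relevant intervals are both bounded). Surjectivity then follows by exhibiting an explicit point in the resulting region (chosen to respect both the Ish intervals and the Coxeter ordering) and checking that its ceiling diagram is exactly $(\pi, \epsilon)$.

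The degrees-of-freedom formula is obtained by computing $\Rec(R)$ directly. Coxeter contributes $v_{\pi_1} \geq v_{\pi_2} \geq \cdots \geq v_{\pi_n}$. Each Ish interval that is effectively bounded (on both sides, whether by Ish itself or by Coxeter combined with a later ceiling) contributes $v_1 = v_{\pi_i}$, while each effectively unbounded one contributes only $v_1 \geq v_{\pi_i}$. Letting $k$ be the largest index with $\epsilon_k > 0$, one checks that for every $\pi^{-1}_1 \leq i \leq k$ the quantity $x_1 - x_{\pi_i}$ is sandwiched between $0$ and $\epsilon_k$ (using Coxeter together with the ceiling at position $k$), so all such $v_{\pi_i}$ collapse to $v_1$; for $i > k$, $x_1 - x_{\pi_i}$ is unbounded above and $v_{\pi_i}$ remains free modulo Coxeter. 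The free parameters are therefore the $\pi^{-1}_1 - 1$ weakly decreasing values $v_{\pi_1} \geq \cdots \geq v_{\pi_{\pi^{-1}_1 - 1}} \geq v_1$, the single value $v_1$, and the $n - k$ weakly decreasing values $v_1 \geq v_{\pi_{k+1}} \geq \cdots \geq v_{\pi_n}$, giving $\dim \Rec(R) = (\pi^{-1}_1 - 1) + 1 + (n - k) = n - k + \pi^{-1}_1$ as claimed. Finally, the ceiling partition reads off immediately: the ceilings of $R$ are precisely the hyperplanes $\{x_1 - x_{\pi_i} = \epsilon_i : \epsilon_i > 0\}$, so the generated partition has arcs $\epsilon_i \sim \pi_i$ at each such position.

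The principal obstacle I foresee is the uniqueness of the reconstruction of $c_i$ from $(\pi, \epsilon)$ when $\epsilon_i = 0$: one must show that the ``maximum allowed value'' rule always yields a consistent sequence and that this is the only choice realizing the specified $\epsilon$. This is where the interplay between bounded and top intervals, together with the two distinct mechanisms by which $\epsilon_i = 0$ can arise, will require either a careful right-to-left induction handling several interacting cases or an indirect cardinality argument matching the known region count $(n+1)^{n-1}$ for $G = K_n$ and its deleted-graph analog.
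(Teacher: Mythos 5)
The paper itself gives no proof of this proposition: it is recalled as background and attributed to Armstrong--Rhoades \cite{AR}, so your argument has to stand on its own. The necessity of the five conditions, the recession-cone computation of the degrees of freedom, and the ceiling-partition statement are sound in outline (the strict inequality $\epsilon_i<\epsilon_j$ needs to be run at a relatively interior point of the facet, where the other hyperplanes are avoided, rather than mixing the closed bound on the facet with the open bound on $R$ as written). The genuine gap is exactly the one you flag and then leave unresolved: you never prove that a valid pair $(\pi,\epsilon)$ is attained by exactly one region, and the reconstruction scheme you sketch rests on a false structural premise. The ``Coxeter-derived monotonicity'' of the interval indices $c_i$ fails as soon as unbounded (top) intervals occur, and your criterion ``$\epsilon_i=0$ iff $c_i=\pi_i-1$ or $c_k=c_i$ for some $k>i$'' is also wrong as stated: a later \emph{unbounded} interval at the same level does not make the constraint $x_1-x_{\pi_i}<c_i+1$ redundant; the later interval must be bounded with the same upper endpoint.

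Concretely, take $n=5$, $\pi=15243$, $\epsilon=(0,0,0,3,0)$, which satisfies all five conditions. The unique region of $\Ish(5)$ with this ceiling diagram has $x_1-x_5\in(2,3)$, $x_1-x_2\in(1,\infty)$, $x_1-x_4\in(2,3)$, $x_1-x_3\in(2,\infty)$, i.e.\ interval indices $(c_2,c_3,c_4,c_5)=(2,1,2,2)$, which is not weakly increasing. Your ``maximum candidate consistent with $c_{i-1}\le c_i$'' rule either forces $c_2\le c_3=1$, producing a region whose ceiling diagram has $\epsilon_2=2\neq 0$, or (if the monotonicity is waived because position $3$ is unbounded) selects the infeasible top value $c_2=\pi_2-1=4$. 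The correct local rule is $c_i=\min(\pi_i-1,\ \epsilon_{k^*}-1)$, where $k^*$ is the next dotted position to the right (top interval if there is none), and proving that this assignment is feasible, reproduces $\epsilon$ exactly (no unwanted ceilings at undotted positions, essential ceilings at dotted ones), and is the only such assignment is precisely the right-to-left case analysis you defer; it also needs adaptation for general $G$, where the interval above $c_i$ is bounded by the next $G$-available level rather than $c_i+1$. The fallback cardinality argument is not free either: within this paper the counts of regions of $\Ish(G)$ are themselves derived from this proposition, so one would have to import the characteristic polynomial of \cite{AR}, count valid pairs independently, and still prove one direction (injectivity or surjectivity) honestly. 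As it stands, the central bijectivity claim is unproven.
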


For example, the Ish ceiling diagram of Figure~\ref{ishdiagram} labels a region of 
$\Ish(G)$ if and only if $G$ contains the edges $1 < 7$, $2 < 3$, $3 < 5$, and $5 < 6$.  Assuming
$G$ contains these edges, the number of degrees of freedom is 
$8 - 7 + 2 = 3$.  Note that we could also calculate degrees of freedom by merging 
the letters of $\pi$ between $1$ and the rightmost dotted letter into a single symbol $\star$
and counting the number of objects in the resulting sequence.  In this example, we
obtain the sequence $4 \star 2$, which contains $3$ objects as desired.

\subsection{Words}  Given a word $w = w_1 w_2 \dots w_k$ (with letters in some alphabet),
we obtain a set partition $\Pi(w)$ of $[k]$ by the rule $i \sim j$ if and only if $w_i = w_j$.  
For example, we have that $\Pi(1331) = \{ \{ 1, 4 \}, \{2, 3\} \}$.  We call $\Pi(w)$ the {\sf position partition}
of the word $w$.

Given $m, k > 0$, the set $[m]^k$ of length $k$ words in the alphabet $[m]$ carries
a free action of the cyclic group $\ZZ_m$ generated by
$w_1 \dots w_k \mapsto (w_1 + 1) \dots (w_k + 1)$, where letters are interpreted modulo $m$.  
This action preserves position partitions and partitions $[m]^k$ into $m^{k-1}$ orbits, each of size $m$.
In the case $m = n+1$ and $k = n$ we will study two distinguished choices of orbit representatives
called parking functions and rook words.  
In the case $m = n-1$ and $k = n$ will study two other distinguished choices of orbit representatives
called prime parking functions and prime rook words.

\section{Parking functions and rook words}
\label{Parking functions and rook words}

\subsection{Parking functions and the Shi arrangement}
A word $w_1 \dots w_n$ with letters in $[n]$
is called a {\sf parking function of size $n$} if the nondecreasing
rearrangement $a_1 \leq \dots \leq a_n$ of the letters in $w_1 \dots w_n$
satisfies $a_i \leq i$ for all $i$.
\footnote{This terminology arises from the following situation.  
Consider a linear parking lot consisting of $n$ spaces and $n$ cars which wish to park
in the lot.  Car $i$ prefers to park in the spot $w_i$.  At stage $i$ of the parking process,
car $i$ will park in the first available spot $\geq w_i$ if any such spots are available.  Otherwise,
car $i$ leaves the lot.  The preference sequence $w_1 \dots w_n$ is a parking function if and only if 
every driver can park.}
We denote by $\Park_n$ the set of parking functions of size $n$.
For example, we have that 
\begin{equation*}
\Park_3 = \{111, 112, 121, 211, 113, 131, 311, 122, 212, 221, 123, 213, 132, 231, 312, 321\}.
\end{equation*}

Parking functions were introduced by Konheim and Weiss \cite{KW} in the context of a hashing problem
in computer science, but have since received a great deal of attention in algebraic combinatorics.
Most famously, Haiman \cite{Haiman} proved that the action of the symmetric group $\symm_n$  on $\Park_n$
given by
\begin{equation*}
\pi.(w_1 \dots w_n) := w_{\pi_1} \dots w_{\pi_n}
\end{equation*}
is isomorphic to the action of $\symm_n$ on the space $DH_n$ of diagonal harmonics.

The number of parking functions of size $n$ is  given by $|\Park_n| = (n+1)^{n-1}$.  
After Shi proved that $\Shi(n)$ has $(n+1)^{n-1}$ regions, it became natural to ask for a labeling 
of the regions of $\Shi(n)$ by parking functions.  The first such labeling was given by a recursive procedure
of Pak and Stanley \cite{Stanley} where one labels the `fundamental alcove' defined by
$x_1 > x_2 \dots > x_n$ and $x_1 - x_n < 1$ by the parking function
$1 1 \dots 1$ and modifies this parking function appropriately as one crosses the hyperplanes 
of $\Shi(n)$.
Athanasiadis and Linusson \cite{ALShi} gave a more direct bijection as follows.

Let $G \subseteq {[n] \choose 2}$ be a graph.
Given a region $R$ of $\Shi(G)$ with Shi ceiling diagram $(\pi, \Pi)$, we will obtain a parking 
function $w_1 \dots w_n \in \Park_n$.  For $1 \leq i \leq n$, suppose that 
$i$ belongs to the block $\{\pi_{b_1} < \dots < \pi_{b_k} \}$ of the set partition $\pi(\Pi)$ 
(where $\{b_1 < \dots < b_k\}$ is a block of $\Pi$).
We let $w_i = b_1$.  In other words, we let $w_i$ be the minimal position in the block
of $\Pi$ which is decorated by the letter $i$ in the permutation $\pi$.  
For example, if $R$ is the region whose Shi ceiling diagram is given in Figure~\ref{shidiagram},
the associated parking function is $32371272 \in \Park_8$.
The next proposition
is essentially due to Athanasiadis and Linusson.

\begin{proposition}
\label{parking-shi-characterization}
Let $G \subseteq {[n] \choose 2}$ be a graph.
The procedure in the last paragraph bijects regions $R$ of $\Shi(G)$ with parking functions 
$w_1 \dots w_n \in \Park_n$ such that every arc in the position partition of the word
$w_1 \dots w_n$ is an edge in the graph $G$.   In particular, the ceiling partition of $R$
equals the position partition of $w_1 \dots w_n$.
\end{proposition}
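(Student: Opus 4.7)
The plan is to verify that the forward map lands in the claimed target set and then to construct an explicit inverse; this should be the deleted-graph analogue of the argument of Athanasiadis and Linusson \cite{ALShi} for the full Shi arrangement.

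First I would verify well-definedness. Given a Shi ceiling diagram $(\pi,\Pi)$ and the associated word $w = w_1 \dots w_n$, two positions $i,j$ satisfy $w_i = w_j$ exactly when $\pi^{-1}(i)$ and $\pi^{-1}(j)$ lie in a common block of $\Pi$, i.e.\ when $i$ and $j$ lie in a common block of $\pi(\Pi)$. Hence the position partition of $w$ equals $\pi(\Pi)$, which by Proposition~\ref{shi-ceiling-diagram-characterization} is the ceiling partition of $R$; this already handles the final assertion of the proposition. Because the second bullet of Proposition~\ref{shi-ceiling-diagram-characterization} guarantees that $\pi$ is increasing on each block, the arcs of $\pi(\Pi)$ are the $\pi$-images of the arcs of $\Pi$, which are edges of $G$ by the third bullet; so every arc of the position partition of $w$ belongs to $G$. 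To see $w \in \Park_n$, I would observe that for each $r \in [n]$ the set $\{i : w_i \leq r\}$ is the union of those blocks of $\Pi$ whose minimum is at most $r$, and this union contains $[r]$, so its size is at least $r$.

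Next I would build an inverse. Starting from a parking function $w$ whose position partition $\Sigma$ has all arcs in $G$, I would read off the prescribed block minima (the distinct letters of $w$) and block sizes (their multiplicities), then assemble $\Pi$ by scanning positions $1, 2, \dots, n$ and maintaining a FIFO queue of active blocks: at each position $i$, open a new block if $i$ appears as a letter of $w$, and otherwise append $i$ to the earliest still-active block. The parking function inequality guarantees that every non-letter position finds an open slot, and the FIFO discipline forces blocks to close in the order they opened, which is the nonnesting condition. The permutation $\pi$ is then forced by the unique order-preserving bijection between each block $B$ of $\Pi$ and the block of $\Sigma$ whose common $w$-value equals $\min(B)$.

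The hard part will be the uniqueness claim underlying the inverse: among partitions of $[n]$ with prescribed block minima and block sizes, FIFO produces the only nonnesting one, since any deviation would force two blocks to close in the reverse order of their openings, creating a nested pair. Once this uniqueness is in hand, round-tripping a diagram $(\pi,\Pi)$ through $w$ and back recovers $\Pi$ (since $\Pi$ itself is already nonnesting with the same min/size data) and then $\pi$ via the forced order-preserving pairing; conversely, the inverse construction produces a pair satisfying all three bullets of Proposition~\ref{shi-ceiling-diagram-characterization}. Thus the two maps are mutually inverse, and the compatibility condition on arcs matches the deleted-graph restriction on both sides, completing the proposition.
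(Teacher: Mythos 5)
Your forward direction is essentially correct: the position partition of $w$ is $\pi(\Pi)$, the second and third bullets of Proposition~\ref{shi-ceiling-diagram-characterization} put the arcs of $\pi(\Pi)$ in $G$, and the counting argument for the parking condition works (with the harmless slip that $\{i : w_i \le r\}$ is the $\pi$-image of the union of blocks of $\Pi$ with minimum at most $r$, not that union itself). Note also that the paper gives no proof of this proposition at all --- it is stated as ``essentially due to Athanasiadis and Linusson'' --- so your argument is not competing with a proof in the text; it has to stand on its own.

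It does not, because the step you yourself flag as the crux is wrong. The rule ``append $i$ to the earliest still-active block,'' justified by ``the FIFO discipline forces blocks to close in the order they opened, which is the nonnesting condition,'' fails. Take $n=5$, $G=K_5$, $w=11122$, so the prescribed data is a block with minimum $1$ and size $3$ and a block with minimum $2$ and size $2$. Your rule sends positions $3$ and $4$ to the block opened at $1$, producing $\Pi'=\{\{1,3,4\},\{2,5\}\}$, in which the arc $\{3,4\}$ nests inside the arc $\{2,5\}$; so $(\pi,\Pi')$ is not a Shi ceiling diagram at all. The unique nonnesting partition with this data is $\Pi=\{\{1,3,5\},\{2,4\}\}$, in which the first-opened block closes \emph{last}; the Shi region with diagram $\pi=14253$, $\Pi=\{\{1,3,5\},\{2,4\}\}$ maps forward to $11122$, and your inverse does not recover it. Hence ``blocks close in the order they opened'' is neither implied by nor equivalent to nonnesting, and the same false equivalence undermines your uniqueness argument (``any deviation would force two blocks to close in the reverse order of their openings, creating a nested pair''). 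The correct greedy attaches each non-letter position to the \emph{active block whose current largest element is smallest} (equivalently, the least recently extended active block): while two blocks are simultaneously active their elements must interleave, and that forced interleaving, position by position, is what gives both existence and uniqueness of the nonnesting partition with prescribed minima and sizes. With that repair, the rest of your plan (minima and multiplicities as the data, order-preserving reconstruction of $\pi$, arcs of the position partition matching the $G$-condition) does go through.
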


We denote by $\omega$ the bijection $\Park_n \xrightarrow{\sim} \{$regions of $\Shi(n) \}$ given by 
Proposition~\ref{parking-shi-characterization}.
For any graph $G \subseteq {[n] \choose 2}$, the bijection $\omega$ restricts to a bijection from the set of parking
functions $w_1 \dots w_n$ such that the arcs of the position partition of $w_1 \dots w_n$ are contained in $G$
to the set of regions of $\Shi(G)$.

\subsection{Rook words}
We are ready to define our new Ish analog of parking functions.
A word $w = w_1 \dots w_n$ with letters in $[n]$ is called a {\sf rook word} if all of the integers
in the closed interval $[1, w_1]$ appear among the letters of $w$.
\footnote{This terminology will soon be justified.}
For example, the word $31552$ is a rook word but $24453$ and $31111$ are not.  We denote
by $\Rook_n$ the set of rook words of size $n$.  For example,
\begin{equation*}
\Rook_3 = \{111, 112, 121, 211, 113, 131, 133, 122, 212, 221, 123, 213, 132, 231, 312, 321 \}.
\end{equation*}

Observe that words can be both parking functions and rook words;  indeed, the difference
$\Park_3 - \Rook_3$ contains the single element $311$ and
$\Rook_3 - \Park_3$ contains the single element $133$.   
The authors do not have a conjecture for the cardinality of $\Park_n \cap \Rook_n$.
As is typical for Ish combinatorics, the 
defining condition of rook words treats the first letter $w_1$ differently from the other letters.
As a result, rook words are less ``symmetric" than parking functions:  the set $\Rook_n$ 
does not carry an action of the full symmetric group $\symm_n$ by letter permutation, but only the parabolic
subgroup $\symm_1 \times \symm_{n-1}$ 
of permutations $\pi \in \symm_n$ satisfying $\pi_1 = 1$.  

\subsection{The Cycle Lemma}
Rook words will ultimately be used to label regions of the Ish arrangement.  By 
Proposition~\ref{parking-shi-characterization}, to relate the regions of Shi and Ish, we
will want to relate parking functions and rook words.  
This will be accomplished by the following Cycle Lemma (which also shows that 
$|\Rook_n| = (n+1)^{n-1}$).

Cycle lemmas play an important role in enumerative combinatorics.  
When one wants to enumerate a finite set $S$ of combinatorial objects, one finds a superset $T \supseteq S$
carrying a free action of a  cyclic group $C$ such that every orbit in the action of $C$ on $T$ contains
a unique element of $S$.  
This reduces the problem of finding $|S|$ to the problem of finding $|T|$ and $|C|$.
Among other things, this basic technique can be used to count 
Dyck paths of size $n$, their Narayana and Kreweras refinements, and their Fuss and rational generalizations.

\begin{lemma} 
\label{cycle}
(Cycle Lemma) Every orbit of the action of 
$\ZZ_{n+1}$ on $[n+1]^n$ contains a unique parking function and a unique rook word, so that the number
of parking functions or rook words is $(n+1)^{n-1}$.
\end{lemma}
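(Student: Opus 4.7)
The plan is to first observe that the $\ZZ_{n+1}$-action on $[n+1]^n$ is free (no nontrivial shift fixes any word, since a shift by $k \not\equiv 0 \pmod{n+1}$ alters every coordinate), so every orbit has size $n+1$ and there are exactly $(n+1)^{n-1}$ orbits. It therefore suffices to show that each orbit contains exactly one parking function and exactly one rook word.

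For parking functions I would invoke the classical Pollak cyclic-parking argument. Arrange $n+1$ parking spots in a circle labeled $\{1, 2, \dots, n+1\}$, interpret $w \in [n+1]^n$ as the preference sequence of $n$ cars parking clockwise, and observe that exactly one spot is left empty, while shifting all preferences by $k$ rotates the empty spot by $k$. A word $w \in [n]^n$ is a parking function precisely when its empty spot is $n+1$, so each orbit contains a unique parking function, namely the image of $w$ under the unique shift rotating its empty spot to $n+1$.

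For rook words I would mirror the Pollak argument with a new descent statistic. Given $w = w_1 \dots w_n \in [n+1]^n$, define $m^*(w) \in [n+1]$ to be the first value not appearing in the multiset $\{w_1, \dots, w_n\}$ as one descends cyclically from $w_1$, that is, along $w_1, w_1 - 1, w_1 - 2, \dots$ reduced mod $n+1$ into $\{1, \dots, n+1\}$. This is well-defined because $n$ letters cannot cover all $n+1$ values. I would then show that $c^* = (n+1) - m^*(w) \pmod{n+1}$ is the unique shift $c \in \{0, 1, \dots, n\}$ for which $w^{(c)}$ is a rook word. The key computation is that $w^{(c)}$ lies in $[n]^n$ iff $m = (n+1) - c \pmod{n+1}$ is absent from $w$, and assuming that, $w^{(c)}$ is a rook word iff the cyclic ``up-arc'' from $m+1$ up to $w_1$ (wrapping if $m > w_1$) lies entirely among the letters of $w$. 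By the defining property of $m^*$, this up-arc is intact precisely when $m = m^*$; for any other missing value $m'$, the up-arc from $m' + 1$ up to $w_1$ necessarily passes through $m^*$, which is itself missing, so the rook word condition fails. The induced bijection $\beta \colon \Rook_n \to \Park_n$ then sends each rook word to the unique parking function in its orbit.

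The main obstacle is the careful bookkeeping required to translate the defining condition of a rook word --- that $\{1, 2, \dots, w_1^{(c)}\}$ all appear as letters of $w^{(c)}$ --- into the statement about $w$ used above, since the two regimes $m^* < w_1$ and $m^* > w_1$ require separate (but parallel) modular calculations, and the convention that $\ZZ/(n+1)\ZZ$ is represented by $\{1, 2, \dots, n+1\}$ makes the wraparound case slightly delicate.
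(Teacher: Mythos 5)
Your proposal is correct and follows essentially the same route as the paper: the parking-function half is exactly Pollak's circular-lot argument, and your rook-word half singles out the same distinguished orbit representative as the paper, which picks the shift with $w_1'$ maximal subject to all of $[1,w_1']$ appearing. Your ``first missing value descending cyclically from $w_1$'' statistic is just an explicit parametrization of that choice, with the uniqueness verification (which the paper declares clear) written out in detail.
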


\begin{proof}
Let $w = w_1 \dots w_n$ be a word in $[n+1]^n$.  Pollak proved that the $\ZZ_{n+1}$-orbit of $w$ 
contains a unique parking function using the following beautiful argument.  Consider a circular parking
lot with $n+1$ parking spaces circularly labeled with $1, 2, \dots, n+1$
and $n$ cars which want to park in the lot.  Interpret $w = w_1 \dots w_n$
as a driver preference sequence.  Since the lot is circular, the parking process will be successful and result
in a single empty space.  The preference sequence $w$ is a parking function if and only if this empty space
is the `additional' space $n+1$ and the action of the cyclic group $\ZZ_{n+1}$ rotates this empty space.

We want to show that the $\ZZ_{n+1}$-orbit of $w$ also contains a unique rook word.  To do this, choose
$w' = w_1' \dots w_n'$ in the $\ZZ_{n+1}$-orbit of $w$ with $w_1'$ maximal such that 
every integer in the interval $[1, w_1']$ appears as a letter of $w'$.  The maximality of $w_1'$ implies that
$n+1$ does not appear among the letters of $w'$, so $w'$ is a rook word.  It is clear that $w'$ is also the 
unique rook word in the $\ZZ_{n+1}$-orbit of $w$.
\end{proof}

For example, the unique parking function in the $\ZZ_6$-orbit
$\{14425, 25536, 36641, 41152, 52263, 63314 \}$ of $14425 \in \Rook_5$ is $41152 \in \Park_5$.
Lemma~\ref{cycle} gives a canonical bijection $\beta: \Rook_n \xrightarrow{\sim} \Park_n$
which preserves position partitions.
For example, we have $\beta(14425) = 41152$.
This bijection is at the heart of 
solving Problem~\ref{dominance-problem} and gives the following isomorphism of $\symm_{n-1}$-sets
(where we identify $\symm_{n-1} \cong \symm_1 \times \symm_{n-1}$):
\begin{equation}
\Park_n \downarrow^{\symm_n}_{\symm_{n-1}} \cong_{\symm_{n-1}} \Rook_n.
\end{equation}

We also have a `prime' version of the Cycle Lemma which keeps track of relatively bounded regions. 
We recall the notion of a prime parking function
and introduce the notion of a prime rook word.

A word $w = w_1 \dots w_n$ with letters in $[n-1]$ is called a {\sf prime parking function of size $n$}
if the nondecreasing rearrangement $a_1 \leq \dots \leq a_n$ of $w$  satisfies
the inequalities $a_1 \leq 1$,
$a_2 \leq 1$, $a_3 \leq 2, \dots, a_n \leq n-1$.  
We denote by $\Park_n'$ the set of prime parking functions of size $n$.
For example, we have
 $\Park_3' = \{111, 112, 121, 211 \}$.

A word $w = w_1 \dots w_n$ with letters in $[n-1]$ is called a {\sf prime rook word of size $n$} if 
$w_1 = 1$.  A prime rook word is automatically a rook word.  
We denote by $\Rook'_n$ the set of prime rook words of size $n$.  
For example, we have
$\Rook_3' = \{111, 112, 121, 122\}$.

The following prime version of the 
Cycle Lemma is well known in the case of prime parking functions and trivial in the case of 
prime rook words.

\begin{lemma}
\label{cycle-prime} (Prime Cycle Lemma)
Every orbit in the action of $\ZZ_{n-1}$ on $[n-1]^n$ contains a unique prime parking function 
and a unique prime rook word, so that the number of prime parking functions or prime 
rook words is $(n-1)^{n-1}$.
\end{lemma}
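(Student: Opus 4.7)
The plan is to treat the two uniqueness statements separately. First, note that the $\ZZ_{n-1}$-action on $[n-1]^n$ is free, since any nontrivial shift changes every coordinate. Hence each orbit has exactly $n-1$ elements and there are $(n-1)^{n-1}$ orbits in total, so once uniqueness of the distinguished representatives is established the counting claim follows.

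For prime rook words, the argument is essentially tautological. As $t$ ranges over $\ZZ_{n-1}$, the first letter $w_1 + t$ (taken mod $n-1$ with residues in $\{1, \ldots, n-1\}$) visits each element of $[n-1]$ exactly once, and a word is a prime rook word precisely when its first letter is $1$. Hence each orbit contains a unique prime rook word.

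For prime parking functions, I would translate the defining condition into a condition on multiplicities. Setting $c_k = |\{i : w_i = k\}|$ for $1 \le k \le n-1$, a short unpacking of the inequalities on the sorted rearrangement $a_1 \le \cdots \le a_n$ shows that $w$ is a prime parking function if and only if the partial sums $s_k := (c_1 - 1) + \cdots + (c_k - 1)$ are all $\ge 1$ for $1 \le k \le n-1$. The shift $w \mapsto w+1$ cyclically rotates $(c_1, \ldots, c_{n-1})$, hence cyclically rotates the step sequence $(c_1 - 1, \ldots, c_{n-1} - 1)$. Because this step sequence sums to $n - (n-1) = 1$, the Dvoretzky-Motzkin cycle lemma for integer sequences guarantees that exactly one of its $n-1$ cyclic rotations has all partial sums strictly positive.

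The main subtlety, and essentially the only step needing a second thought, is that the map $w \mapsto (c_1, \ldots, c_{n-1})$ is many-to-one, so one must check that the $n-1$ distinct shifts of $w$ produce $n-1$ distinct multiplicity vectors. This is automatic: any nontrivial period $p$ dividing $n-1$ of the step sequence would force the sum over a single period to equal $p/(n-1)$, which is a positive integer only when $p = n-1$. The step sequence (and hence the multiplicity vector) is therefore aperiodic, so the correspondence between the $n-1$ shifts of $w$ and the $n-1$ cyclic rotations of the multiplicity vector is a bijection. The unique rotation singled out by the cycle lemma picks out the unique prime parking function in the orbit, completing the proof.
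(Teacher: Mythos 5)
Your proof is correct and complete, and it actually supplies more than the paper does: the paper gives no argument for this lemma at all, remarking only that the statement is ``well known'' for prime parking functions and ``trivial'' for prime rook words. Your rook-word half coincides with the paper's implicit reasoning (as the orbit is traversed the first letter runs over all of $[n-1]$, and primality is exactly the condition $w_1=1$). For the parking-function half, the paper's model argument (its proof of Lemma~\ref{cycle}) is Pollak's circular parking lot, whereas you pass to multiplicity vectors and apply the Dvoretzky--Motzkin cycle lemma to the step sequence $(c_1-1,\dots,c_{n-1}-1)$ of total sum $1$; this is a standard and equally valid alternative route, and it has the virtue of making the prime case a direct instance of a general lemma rather than a modification of the lot argument. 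Your translation of the prime condition is right: $a_{k+1}\le k$ is exactly $c_1+\cdots+c_k\ge k+1$, i.e.\ $s_k\ge 1$ for $1\le k\le n-1$. The aperiodicity check correctly disposes of the only possible objection, namely that distinct shifts of $w$ could yield the same rotation of the multiplicity vector; alternatively, one can observe that the cycle lemma's uniqueness is naturally a statement about starting indices (the last position achieving the minimum of the partial sums) rather than about rotations as abstract sequences, which makes that discussion avoidable. Either way the argument stands, and the count $(n-1)^{n-1}$ follows from freeness of the $\ZZ_{n-1}$-action exactly as you say.
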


We denote by $\beta': \Rook_n' \rightarrow \Park_n'$ the bijection induced by Lemma~\ref{cycle-prime}.
For example, we have $\beta'(122) = 211$.  As with $\beta$, the bijection $\beta'$ preserves 
position partitions.

\section{Rook placements and the Ish arrangement}
\label{Rook placements and the Ish arrangement}

\subsection{Rook placements}  In order to give our bijections between the
regions of the Shi and Ish arrangements, we will reinterpret Ish ceiling 
diagrams in terms of rook placements.

A {\sf board} $B$ is a finite subset of the integer lattice $\ZZ \times \ZZ$.  A {\sf rook placement} on $B$ 
is a placement of non-attacking rooks on the squares of $B$.  
A rook placement on $B$ is called {\sf maximal} if it is maximal among the collection of all rook placements,
where we identify rook placements with subsets of $B$.
We will focus on rook placements
on two families of boards.  

For $n \geq 1$ we let $\widehat{B}_n$ be the ``bottom justified" board with 
column heights from left to right given by $(n, n+1, \dots, 2n)$ and $B_n$ be the bottom justified
board with left-to-right column heights $(n+1, n+2, \dots, 2n)$.
 We define the coordinates of $\widehat{B}_n$ and $B_n$ as follows:
\begin{align}
\widehat{B}_n &:= \{ (i, j) \,:\, 1 \leq i \leq n, 1 \leq j \leq n+i-1 \}. \\
B_n &:= \{ (i, j) \,:\, 2 \leq i \leq n, 1 \leq j \leq n+i-1 \} 
\end{align}
The boards $\widehat{B}_n$ and $B_n$ naturally decompose into a lower rectangular part 
below the line $y = n$ and an upper triangular part above the line $y = n$.  We will refer 
to the squares in these parts as being {\sf below the bar} or {\sf above the bar}, respectively.

The boards $\widehat{B}_8$ and $B_8$ are shown in Figure~\ref{ishrook}.  ``The bar" is bold and blue.
As shown in Figure~\ref{ishrook}, we label the columns of $\widehat{B}_n$ and $B_n$ by their $x$-coordinates
and the rows of $\widehat{B}_n$ and $B_n$ by either their $y$-coordinates or their $y$-coordinates minus $n$,
depending on whether these rows are above or below the bar.

If $G \subset { [n] \choose 2 }$ is a graph, we denote by $\widehat{B}_n(G)$ (resp. $B_n(G)$)
the subset of $\widehat{B}_n$ (resp. $B_n$) obtained by deleting the squares
$(j, n+i)$ above the bar for which $(i < j)$ is not an edge of $G$.  At the extremes,
if $G = K_n$ is the complete graph we have 
$\widehat{B}_n(K_n) = \widehat{B}_n$
and $B_n(K_n) = B_n$
 and if $G = \emptyset$ is the empty graph
 we have that $\widehat{B}_n(\emptyset)$ is the rectangular board
 $\{1, 2, \dots, n\} \times \{1, 2, \dots, n\}$ and
$B_n(\emptyset)$ is the rectangular board $\{2, 3, \dots, n\} \times \{1, 2, \dots, n\}$.

\subsection{Rook placements and Ish regions}

Let $G \subseteq {[n] \choose 2}$ be a graph.
Given a region $R$ of $\Ish(G)$ with Ish ceiling diagram
$(\pi, \epsilon)$, we define rook placements $\widehat{\rho}(R)$ and $\rho(R)$
on  $\widehat{B}_n$  and $B_n$ as follows.  Write $\pi = \pi_1 \dots \pi_n$ and
$\epsilon = \epsilon_1 \dots \epsilon_n$.
The rook placement $\widehat{\rho}(R)$ on $\widehat{B}_n$ consists of $n$ rooks, one in every column.
For $1 \leq i \leq n$, Rook $i$ is placed as follows.
\begin{itemize}
\item If $\epsilon_{\pi^{-1}_i} = 0$ (i.e., if there are no dots above $i$ in the Ish ceiling diagram of $R$),
then Rook $i$ is placed below the bar in position $(i, \pi^{-1}_i)$.
\item If $\epsilon_{\pi^{-1}_i} > 0$ (i.e., if there are dots above $i$ in the Ish ceiling diagram of $R$),
then Rook $i$ is placed above the bar in position $(i, n + \epsilon_i)$.
\end{itemize}
The rook placement $\rho(R)$ on $B_n$ is just the restriction of $\widehat{\rho}(R)$ from
$\widehat{B}_n$ to $B_n$ (so that
$\rho(R)$ contains $n-1$ rooks).  Either of two rook placements contains the same information as 
the Ish ceiling diagram.

The left of Figure~\ref{ishrook} shows the rook placement $\rho(R)$ for the region $R$
whose Ish ceiling diagram is shown in Figure~\ref{ishdiagram}.  The right of
Figure~\ref{ishrook} shows the rook placement $\widehat{\rho}(R)$.

\begin{figure}[H]
\centering
\includegraphics[scale = 1]{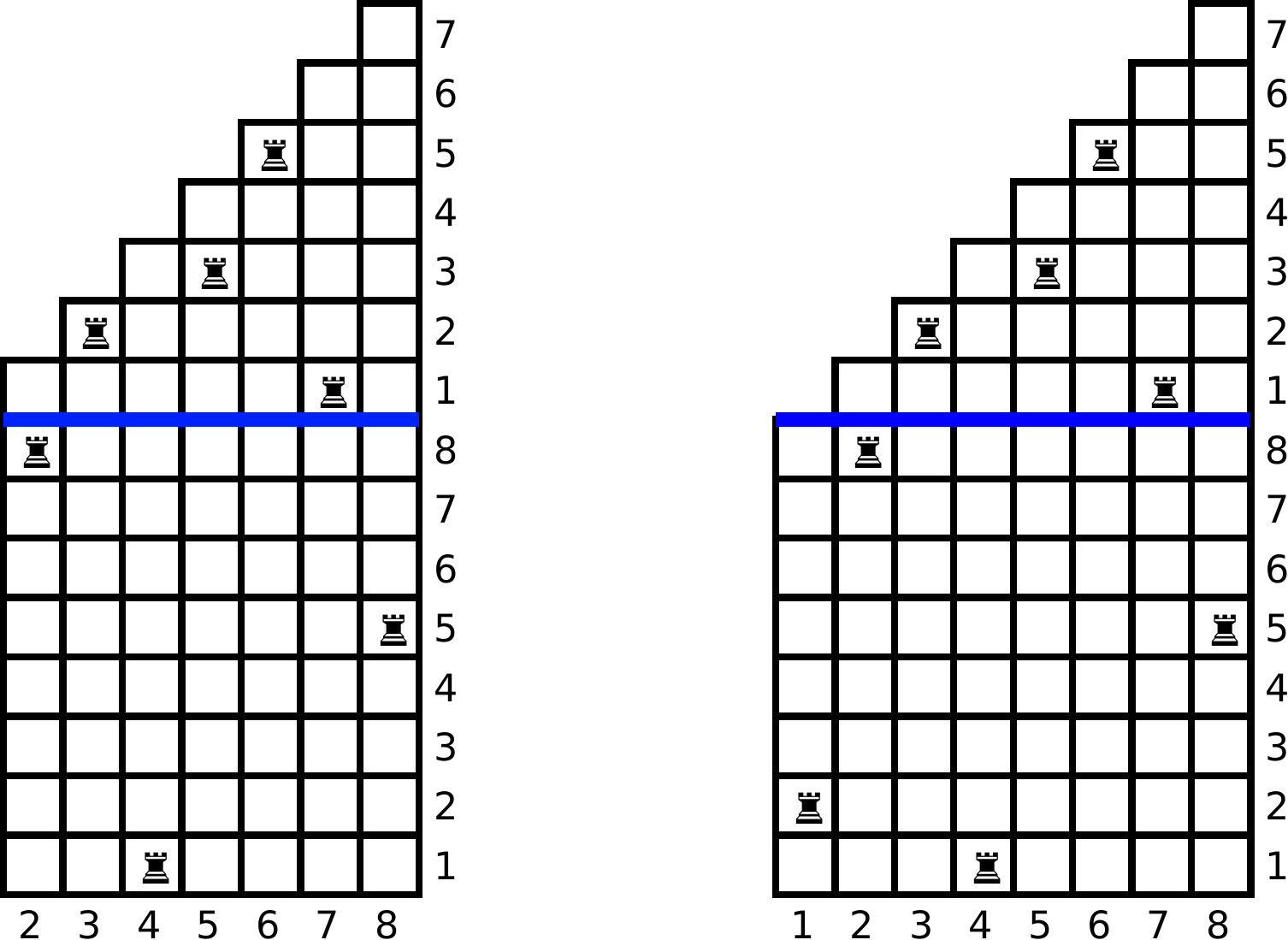}
\caption{The rook placements corresponding to the Ish region of Figure~\ref{ishdiagram}.}
\label{ishrook}
\end{figure}

\begin{lemma}
\label{same-information}
The rook placement $\widehat{\rho}(R)$ is determined by the rook placement $\rho(R)$.
The Ish ceiling diagram $(\pi, \epsilon)$ of $R$ is determined by either $\rho(R)$ 
or $\widehat{\rho}(R)$.
\end{lemma}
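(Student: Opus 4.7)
I plan to deduce both statements from a single bookkeeping argument: by definition $\rho(R)$ is obtained from $\widehat{\rho}(R)$ by deleting the column-$1$ rook, so recovering $\widehat{\rho}(R)$ from $\rho(R)$ amounts to locating that missing rook, and recovering $(\pi,\epsilon)$ from $\rho(R)$ will then be immediate from the construction of $\widehat{\rho}$.

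The first step is to verify that the column-$1$ rook of $\widehat{\rho}(R)$ always sits below the bar. Letting $j^* := \pi_1^{-1}$, Proposition~\ref{ish-ceiling-diagram-characterization} forces $\epsilon_{j^*} < \pi_{j^*} = 1$, hence $\epsilon_{j^*} = 0$, so by construction the column-$1$ rook is placed at $(1,j^*)$ below the bar. The crucial step is then to identify the row $j^*$ from the data of $\rho(R)$ alone. Writing $S := \{j : \epsilon_j > 0\} = \{j_1 < \cdots < j_k\}$ for the set of dotted positions, the construction shows that the below-bar rows occupied by $\widehat{\rho}(R)$ are precisely $[n]\setminus S$, so the unoccupied below-bar rows of $\rho(R)$ form the set $\{j^*, j_1, \ldots, j_k\}$. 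The inequality $\pi_1^{-1} < i$ for every $i \in S$, also provided by Proposition~\ref{ish-ceiling-diagram-characterization}, then singles out $j^*$ as the \emph{smallest} unoccupied below-bar row of $\rho(R)$. This fully locates the column-$1$ rook and yields the first assertion.

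For the second assertion, once $j^*$ is known the remaining unoccupied below-bar rows of $\rho(R)$ are exactly the dotted positions $j_1 < \cdots < j_k$. Sorting the above-bar rooks of $\rho(R)$ by row, the monotonicity $\epsilon_{j_1} < \cdots < \epsilon_{j_k}$ from Proposition~\ref{ish-ceiling-diagram-characterization} matches the $t$-th such rook with the dotted position $j_t$, so I read off $\epsilon_{j_t}$ (from its row, minus $n$) and $\pi_{j_t}$ (from its column); for any other row $j$ not in $\{j^*,j_1,\ldots,j_k\}$, the value $\pi_j$ is the column of the below-bar rook in row $j$, and $\pi_{j^*} = 1$. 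This reconstructs $(\pi,\epsilon)$ from $\rho(R)$, and therefore also from $\widehat{\rho}(R)$.

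The only potentially delicate point in this plan is the identification of $j^*$ as the smallest unoccupied below-bar row, and even there the inequality $\pi_1^{-1} < i$ for dotted $i$ does essentially all the work; every other step is a direct readoff from the construction of $\widehat{\rho}$ using the structural constraints on Ish ceiling diagrams listed in Proposition~\ref{ish-ceiling-diagram-characterization}.
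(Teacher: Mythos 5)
Your argument is correct and follows essentially the same route as the paper: locate the missing column-$1$ rook as the lowest unoccupied below-the-bar row (using $\pi^{-1}_1 < i$ for all dotted positions $i$), then recover the dotted positions and their letters by matching the above-the-bar rooks, sorted by row, to the remaining unoccupied rows via the monotonicity of the positive $\epsilon_j$. Your treatment is slightly more explicit than the paper's (which separates the $\epsilon = 0$ case), but it is the same proof in substance.
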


\begin{proof}
If $\epsilon$ is the zero vector (i.e., if $\pi$ has no dots above it), the fact that
$\rho(R)$ determines $\widehat{\rho}(R)$ 
is just the statement that $\pi^{-1}_1$ is determined by $\pi^{-1}_2, \dots, \pi^{-1}_n$.  Otherwise,
Proposition~\ref{ish-ceiling-diagram-characterization} states that 
$\pi^{-1}_1$ is less than every value of $j$ for which $\epsilon_j > 0$.  This means that
the unique rook in $\widehat{\rho}(R) - \rho(R)$ must be in the lowest row which is unoccupied
in $\rho(R)$.  

The rook placement $\widehat{\rho}(R)$ describes the positions and values of the undotted
letters in the permutation $\pi = \pi_1 \dots \pi_n$ as well as the values and number of dots above
the dotted letters of $\pi$.  Since positive accumulations of dots increase from left to right in an Ish 
ceiling diagram, this also determines the positions of the dotted letters of $\pi$.  We conclude
that $\widehat{\rho}(R)$ determines the full Ish ceiling diagram $(\pi, \epsilon)$.
\end{proof}

It will be useful to characterize the rook placements $\rho(R)$ and
$\widehat{\rho}(R)$ which come from regions 
$R$ of $\Ish(G)$.  The proof of the following reformulation of 
Proposition~\ref{ish-ceiling-diagram-characterization} is left to the reader.  We call a 
maximal rook placement on $\widehat{B}_n(G)$ {\sf valid} if every row below the rook in Column 1 contains
a rook.

\begin{lemma}
\label{rook-reformulation}
Let $G \subseteq {[n] \choose 2}$ be a graph.
The map $R \mapsto \rho(R)$ bijects regions of $\Ish(G)$ with maximal rook placements on $B_n(G)$.
The map $R \mapsto \widehat{\rho}(R)$ bijects regions of $\Ish(G)$ with valid rook
placements on $\widehat{B}_n(G)$.
\end{lemma}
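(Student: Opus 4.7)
The plan is to read Lemma~\ref{rook-reformulation} as a dictionary between the Ish ceiling diagrams $(\pi,\epsilon)$ classified by Proposition~\ref{ish-ceiling-diagram-characterization} and rook placements on $\widehat{B}_n(G)$ and $B_n(G)$, with the validity and maximality hypotheses encoding the last three bullet points of that proposition. I would first prove the statement for $\widehat{\rho}$ and $\widehat{B}_n(G)$, then transfer the statement to $\rho$ and $B_n(G)$ using Lemma~\ref{same-information} together with a short maximality argument.

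First I would check that $\widehat{\rho}$ sends each region $R$ to a valid rook placement on $\widehat{B}_n(G)$. By construction there is exactly one rook per column of $\widehat{B}_n$. Non-attacking in rows splits into two independent cases: the below-bar rooks $(i,\pi^{-1}_i)$ sit in distinct rows because $\pi\in\symm_n$, while the above-bar rooks sit in distinct rows by the strict-increase condition on the positive entries of $\epsilon$. Membership in $\widehat{B}_n(G)$ of each above-bar rook is exactly the edge condition $(\epsilon_i<\pi_i)\in G$ from Proposition~\ref{ish-ceiling-diagram-characterization}. The rook in column $1$ is automatically below the bar, because $\epsilon_j<\pi_j=1$ at $j=\pi^{-1}_1$ forces $\epsilon_j=0$; it sits at row $\pi^{-1}_1$, and the hypothesis $\epsilon_j>0 \Rightarrow \pi^{-1}_1<j$ says exactly that every position $j<\pi^{-1}_1$ is undotted, which in turn says every row below $\pi^{-1}_1$ is occupied below the bar. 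For the inverse, from a valid placement $P$ I would read off $\pi,\epsilon$ by assigning $\pi_j=i$ and $\epsilon_j=0$ to every below-bar rook $(i,j)$ and then by placing the letters $i$ with above-bar rooks in the remaining positions of $\pi$, ordered by row-height so that the resulting $\epsilon$ is strictly increasing to the right. A routine verification of the five bullets in Proposition~\ref{ish-ceiling-diagram-characterization} then shows this procedure inverts $\widehat{\rho}$.

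For the statement about $\rho$ and $B_n(G)$, I would combine Lemma~\ref{same-information} with the following claim: the maximal placements on $B_n(G)$ are precisely the restrictions of the valid placements on $\widehat{B}_n(G)$. Given a maximal $P'$ on $B_n(G)$, every column $i\in\{2,\dots,n\}$ must carry a rook, because otherwise the full below-bar slice $\{i\}\times[n]\subseteq B_n(G)$ would contain some row free of rooks (the other $n-2$ columns carry at most $n-2$ rooks), producing an addable square and contradicting maximality. Hence $P'$ has exactly $n-1$ rooks, and there is a unique smallest below-bar row $r$ not occupied; placing a rook at $(1,r)$ gives the unique valid extension of $P'$ to $\widehat{B}_n(G)$, since any higher choice would leave row $r$ empty below it and violate validity. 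This extension is mutually inverse to restriction, so bijectivity of $\widehat{\rho}$ transfers to $\rho$.

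The only non-automatic step is the three-way matching among the Ish condition $\epsilon_j>0 \Rightarrow \pi^{-1}_1<j$, the validity condition on $\widehat{B}_n(G)$, and the maximality condition on $B_n(G)$. Each individual verification is short, but I would take care to keep track of which rows and which positions of $\pi$ are being referenced, since both are indexed by $[n]$ but play quite different roles in the dictionary.
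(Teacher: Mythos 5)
Your proposal is correct, and it follows exactly the route the paper intends: the paper leaves this proof to the reader as a ``reformulation'' of Proposition~\ref{ish-ceiling-diagram-characterization}, and your dictionary (edge condition $\leftrightarrow$ board membership, $\epsilon_i<\pi_i$ $\leftrightarrow$ column height, monotonicity of dots $\leftrightarrow$ distinct above-bar rows ordered by height, the condition $\epsilon_j>0\Rightarrow\pi^{-1}_1<j$ $\leftrightarrow$ validity) together with the transfer between $\widehat{\rho}$ and $\rho$ is precisely the argument sketched in the proof of Lemma~\ref{same-information}. Your added maximality argument for why the column-1 rook of a valid placement is forced to sit in the lowest unoccupied row matches the paper's first paragraph of that lemma, so nothing is missing.
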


Lemma~\ref{rook-reformulation}
implies that the number of regions of $\Ish(G)$ equals the number of 
maximal rook placements on $B_n(G)$.  In the special case
$G = K_n$, we have that $B_n(G) = B_n$ and we see immediately that $\Ish(n)$ has
$(n+1)^{n-1}$ regions (there are $(n+1)$ choices for the rook in the first column of $B_n$, then
$(n+2)-1$ choices for the rook in the second column, then $(n+3)-2$ choices for the 
rook in the third column, etc., leading to $n+1$ choices for each of the $n-1$ columns of $B_n$).  At 
the other extreme $G = \emptyset$, the obvious bijection between maximal rook placements 
on the below the bar portion of $B_n$ and permutation matrices recovers the fact that
 $\Cox(n)$ has $n!$ regions.
 
 More generally, for any graph $G \subseteq {[n] \choose 2}$ and $1 \leq k \leq n$, 
let $\Stir(G, k)$ denote the number of set partitions $\Pi$ of $[n]$ with exactly $k$ blocks such that 
every arc of $\Pi$ is an edge of $G$.  
When $G = K_n$ is the complete graph, the number $\Stir(K_n, k)$ is the classical Stirling number
of the second kind counting set partitions of $[n]$ with exactly $k$ blocks.  At the other extreme, we have
$\Stir(\emptyset, k) = \delta_{k, n}$.

A maximal rook placement on the board $B_n(G)$ consists of $n-1$ rooks.  If $k$ of these rooks 
are above the bar, then the rooks above the bar correspond to the arcs of a set partition counted
by $\Stir(G, n-k)$ (where an above the bar rook in position $(j, n+i)$ corresponds to the 
arc $(i < j)$).  If we fix the positions of the $k$ rooks above the bar, we are left with
$n (n-1) \cdots (k+2) = \frac{n!}{(k+1)!}$ choices for how to place the $n-k-1$ rooks below the bar.
Summing up over $k$, we have that the total number of maximal rook placements on 
$B_n(G)$ (or the number of regions of $\Ish(G)$) is
\begin{equation}
\label{region-enumeration}
\sum_{k = 0}^{n-1} \Stir(G, n-k) \frac{n!}{(k+1)!}.
\end{equation}
Using similar reasoning, one can show that if $\Pi$ is a partition of $[n]$ counted by 
$\Stir(G, n-k)$, then the number of regions of $\Ish(G)$ with ceiling partition $\Pi$ equals
\begin{equation}
\frac{n!}{(k+1)!}.
\end{equation}
This is equivalent to the Ish statement of
\cite[Theorem 5.1 (1)]{AR}.

Equation~\ref{region-enumeration} also follows from an expression for the 
characteristic polynomial $\chi_{\Ish(G)}(p) \in \ZZ[p]$ of the arrangement
$\Ish(G)$ contained in \cite{AR}.  Namely,
in \cite[Theorem 3.2]{AR} it is shown that
\begin{equation}
\label{ish-characteristic-polynomial}
\chi_{\Ish(G)}(p) = p \sum_{k=0}^{n-1} (-1)^k \Stir(G, n-k) (p-k-1) (p-k-2) \cdots (p-n+1).
\end{equation}
Evaluating Equation~\ref{ish-characteristic-polynomial} at $p = -1$ and applying Zaslavsky's Theorem
shows that the number of regions of $\Ish(G)$ is given by Equation~\ref{region-enumeration}.

Finally, if $B$ is any board and $m \geq 0$, the {\sf rook number} $r_m(B)$ counts the number 
of ways to place $m$ rooks on $B$.  By the same reasoning used to derive Equation~\ref{region-enumeration},
we have that
\begin{equation}
r_m(B_n(G)) = \sum_{k = 0}^{m} \Stir(G, n-k) {n-k-1 \choose m-k} \frac{n!}{(n-m-k-1)!},
\end{equation}
for $0 \leq m \leq n-1$.  The numbers $r_m(B_n(G))$ count equivalence classes of regions of
$\Ish(G)$ corresponding to ways to complete an $m$-rook placement to a maximal rook placement.
The role non-maximal rook placements play in Shi/Ish theory is unclear.

\section{A bijection between the regions of $\Shi(n)$ and $\Ish(n)$}
\label{A bijection between the regions}

\begin{figure}
\centering
\includegraphics[scale = 1]{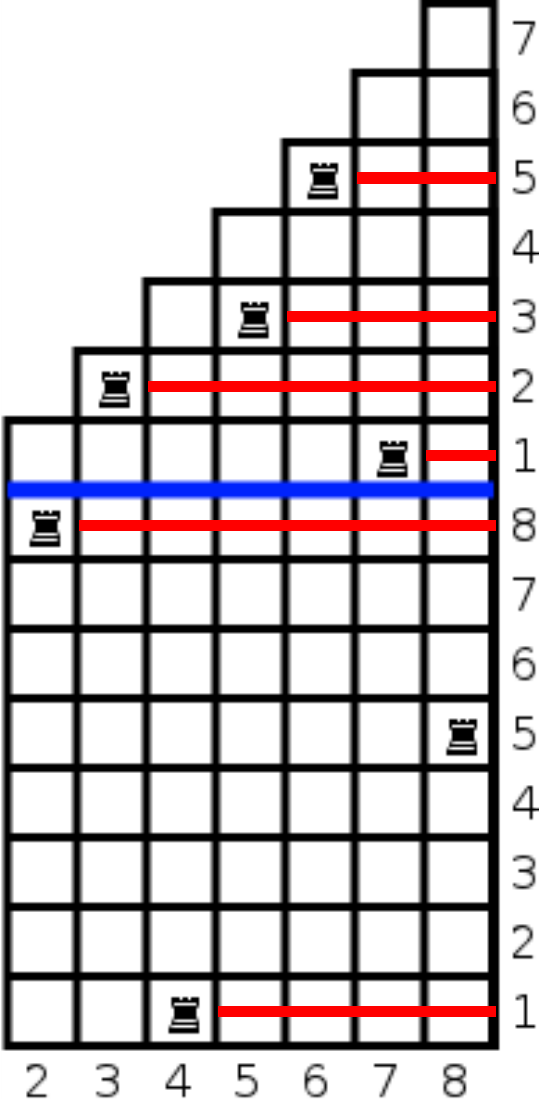}
\caption{The horizontal laser construction.}
\label{horizontalrook}
\end{figure}

We are ready to solve Problem~\ref{basic-problem} by giving a bijection between the regions of 
$\Shi(n)$ and $\Ish(n)$.  This will be the easiest of our bijections to state, but will not preserve 
dominance, ceiling partitions, or degrees of freedom and will not in general restrict to
a bijection between the regions of $\Shi(G)$ and $\Ish(G)$ for arbitrary subgroups $G$.

We define a map $\alpha: \{$maximal rook placements on $B_n \} \rightarrow \Park_n$ as follows.  If 
$\rho$ is a maximal rook placement on $B_n$, fire lasers rightward from every rook of $\rho$ to the right 
 side of the board $B_n$.  Figure~\ref{horizontalrook} shows an example of this construction.
 We define a word $v_1 \dots v_n \in [n+1]^n$ by the rule
\begin{equation}
v_i = \begin{cases}
1 & \text{if $i = 1$,} \\
\# \left\{  \begin{array}{c} \text{squares weakly below the rook in} \\ \text{Column $i$ which do not contain
laser fire} \end{array}
\right\} &
\text{if $2 \leq i \leq n$.}
\end{cases}
\end{equation}
 By the Cycle Lemma for parking functions, there exists a unique parking function
 $w_1 \dots w_n$ in the $\ZZ_{n+1}$-orbit of $v_1 \dots v_n$.  We set 
 $\alpha(\rho) := w_1 \dots w_n$.
 In the case of Figure~\ref{horizontalrook}, the word $v_1 \dots v_8$ is given by
 $18918974$ and $w_1 \dots w_n$ is given by $42342315$.
 
 \begin{theorem}
 \label{basic-theorem}
 The composition $\omega \circ \alpha \circ \rho$ is a bijection from the set of regions of 
 $\Ish(n)$ to the set of regions of $\Shi(n)$.
 \end{theorem}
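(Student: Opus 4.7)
The plan is to reduce the claim to showing that $\alpha$ is a bijection, since Lemma~\ref{rook-reformulation} already gives that $\rho$ is a bijection from regions of $\Ish(n)$ to maximal rook placements on $B_n$, and Proposition~\ref{parking-shi-characterization} already gives that $\omega$ is a bijection from $\Park_n$ to regions of $\Shi(n)$. A cardinality check confirms that all four sets have size $(n+1)^{n-1}$: this is Shi's count, Armstrong's count, and (as noted immediately after Lemma~\ref{rook-reformulation}) the observation that Column $i$ of $B_n$ has $n+i-1$ squares of which $i-2$ must be skipped because of the previously placed rooks, leaving $n+1$ choices per column.

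I would factor $\alpha$ as the composition of two simpler bijections. First is the laser construction itself, which I claim is a bijection from maximal rook placements on $B_n$ to the set $\{1\}\times[n+1]^{n-1}$ of words in $[n+1]^n$ starting with $1$. Encode a placement by the row heights $h_2,\dots,h_n$. For $i\ge 2$, exactly $i-2$ squares of Column $i$ are blocked by lasers from rooks in earlier columns (the rows $h_2,\dots,h_{i-1}$, distinct by non-attacking), so the rook in Column $i$ sits in the $v_i$-th unblocked row from the bottom, where $v_i\in\{1,\dots,(n+i-1)-(i-2)\}=[n+1]$. Conversely, given any word $v\in\{1\}\times[n+1]^{n-1}$, one may inductively recover the $h_i$ by placing a rook in the $v_i$-th unblocked row of Column $i$; this is always legal since $n+1$ unblocked rows are available when Rook $i$ is placed. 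Thus the laser construction is a bijection.

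Second is the cycle-lemma step, which sends a word $v\in\{1\}\times[n+1]^{n-1}$ to the unique parking function in its $\ZZ_{n+1}$-orbit. Under the generator $(w_1,\dots,w_n)\mapsto(w_1+1,\dots,w_n+1)$ (letters mod $n+1$), the first coordinate visits each residue in $[n+1]$ exactly once per orbit, so each orbit contains a unique element with first coordinate $1$. By Lemma~\ref{cycle}, each orbit also contains a unique parking function. Hence the correspondence $v\leftrightarrow w$ through a common orbit is a bijection from $\{1\}\times[n+1]^{n-1}$ to $\Park_n$. Combining the two steps shows that $\alpha$ is a bijection, and composing with the bijections $\rho$ and $\omega$ finishes the proof.

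The main obstacle is the first factor: verifying carefully that the laser construction is invertible and that the word it produces actually lies in $[n+1]^n$ rather than in some larger alphabet. This is essentially bookkeeping about which rows of $B_n$ are blocked by previous lasers, but it is the only place where the specific geometry of $B_n$ (column heights $n+1,n+2,\dots,2n$, precisely matching $n+1$ available rows per column after previous rooks are placed) enters the argument; without this match, the intermediate word would not lie in the alphabet demanded by the Cycle Lemma. Once this is checked, the bijectivity of $\omega\circ\alpha\circ\rho$ is formal.
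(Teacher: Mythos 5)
Your proposal is correct and takes essentially the same route as the paper's proof: reduce to bijectivity of $\alpha$ via Proposition~\ref{parking-shi-characterization} and Lemma~\ref{rook-reformulation}, show the horizontal laser construction bijects maximal rook placements on $B_n$ with words $1v_2\dots v_n \in \{1\}\times[n+1]^{n-1}$ (the paper does this by exhibiting the inverse placement procedure), and then pass to the unique parking function in each $\ZZ_{n+1}$-orbit via the Cycle Lemma. Your count of the $i-2$ blocked squares in Column $i$ and the remark that each orbit contains a unique word with first letter $1$ just spell out details the paper leaves implicit.
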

 
 \begin{proof}
 By Proposition~\ref{parking-shi-characterization} and Lemma~\ref{rook-reformulation},
 it is enough to show that the map $\alpha$ is a bijection.  In terms of the intermediate words
 $v_1 \dots v_n$ used to define $\alpha(\rho)$ for a maximal rook placement $\rho$ on 
 $B_n$, this reduces to showing that every word in $[n+1]^{n-1}$ arises as 
 $v_2 \dots v_n$ for some rook placement $\rho$.  
 
 Given a word $v_2 \dots v_n \in [n+1]^{n-1}$, we construct this rook placement $\rho$ on $B_n$
 as follows.
 \begin{itemize}
 \item Place Rook $1$ in Column $2$ at height $v_2$ and fire a laser rightward from Rook $1$ to 
 the right side of $B_n$.
 \item For $1 \leq i < n-1$, having placed and fired Rooks $1, 2, \dots, i-1$, place Rook $i$ at the
 unique height in Column $i+1$ such that there are $v_{i+1}$ non-laser squares weakly below
 Rook $i$.
 \end{itemize}
 It can be checked that the `intermediate word' in the construction of $\alpha(\rho)$ is
 $1v_2 \dots v_n$, as desired.
 \end{proof}
 
 Continuing with our example,
 if $R$ is the region of $\Ish(8)$ whose Ish ceiling diagram is shown in Figure~\ref{ishdiagram},
 we calculated that
 $(\alpha \circ \rho)(R) = 42342315 \in \Park_8$.  We conclude that the composition
 $(\omega \circ \alpha \circ \rho)(R)$ is the region of $\Shi(8)$ whose Shi ceiling diagram is
 $(\pi, \Pi)$, where $\pi = 7 2 3 1 8 5 6 4$ and 
 $\Pi = \{ \{1\}, \{2, 6\}, \{3, 7\}, \{4, 8\}, \{5\} \}$. 
 The ceiling partition of this Shi region is
 $\pi(\Pi) = \{ \{7\}, \{2, 5\}, \{3, 6\}, \{1, 4\}, \{8\} \}$, which does not agree with the ceiling partition of the
 Ish region $R$.  Moreover, the region $R$ has $3$ degrees of freedom while the Shi region
 $(\omega \circ \alpha \circ \rho)(R)$ has $2$ degrees of freedom.

\section{A bijection between the regions of $\Shi(G)$ and $\Ish(G)$ preserving ceiling partitions
and dominance}
\label{Dominance section}

Let $G \subseteq {[n] \choose 2}$ be a graph.
By working a little harder than in the last section, we can get a bijection between regions of
$\Shi(G)$ and $\Ish(G)$ which preserves ceiling partitions and dominance.  
We again exploit Proposition~\ref{parking-shi-characterization} and Lemma~\ref{rook-reformulation}
to recast this problem in terms of rook placements. 

 Observe that any bijection between
 the regions of $\Shi(n)$ and $\Ish(n)$ which preserves ceiling partitions and dominance 
 restricts to a bijection between the regions of $\Shi(G)$ and $\Ish(G)$ which preserves ceiling 
 partitions and dominance.  Without loss of generality, we  restrict to the case $G = K_n$.

\begin{figure}
\centering
\includegraphics[scale = 1]{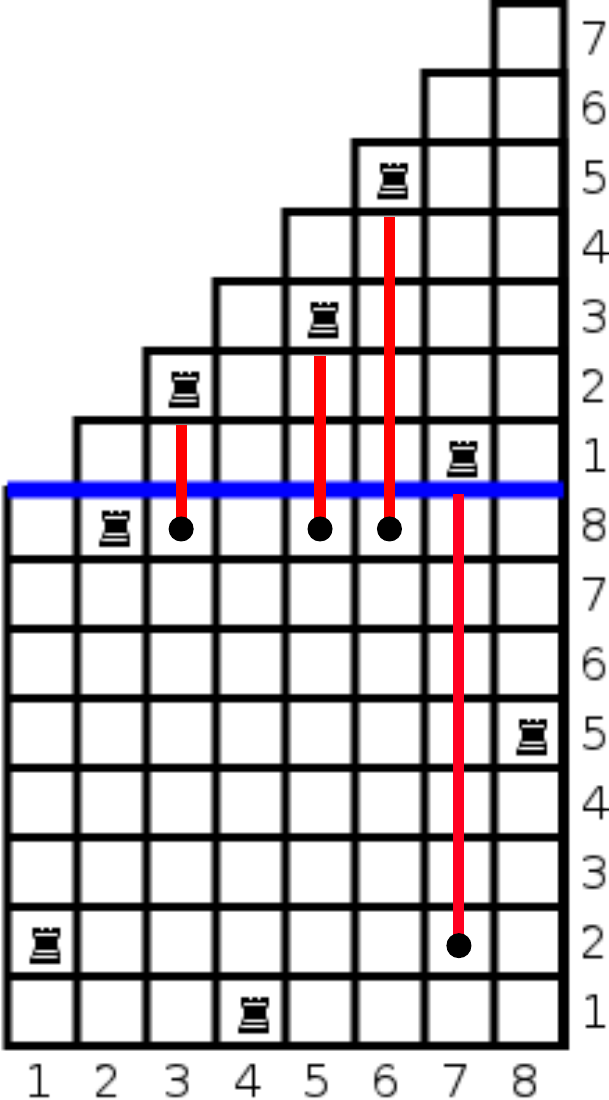}
\caption{The vertical laser construction.}
\label{verticalrook}
\end{figure}

We define a map $\lambda: \{$valid rook placements on $\widehat{B}_n\} \rightarrow \Rook_n$ 
as follows.  Let $\widehat{\rho}$ be a valid rook placement on $\widehat{B}_n$.
As with the map $\alpha$ of the last section, the first step is to fire lasers from the rooks 
of $\widehat{\rho}$.  In contrast to the map $\alpha$, the lasers defining $\lambda$ will be fired
down rather than to the right, will not in general reach the end of the board, and will only be
fired from rooks above the bar.  

More precisely, let Rooks $1, 2, \dots, n$ be the rooks in
Columns $1, 2, \dots, n$.  We read across $\widehat{B}_n$ from left to right.  If Rook $j$ is 
above the bar with coordinates $(j, n+i)$, fire a laser downwards from Rook $j$ with 
lower
endpoint given as follows (where we use the fact that $i < j$).
\begin{itemize}
\item If Rook $i$ is below the bar, the laser from Rook $j$ ends in the row of Rook $i$.
\item If Rook $i$ is above the bar, the laser from Rook $j$ ends in the same row as the laser fired down from
Rook $i$.
\end{itemize}
This vertical laser construction is shown in Figure~\ref{verticalrook}.  We let 
$\lambda(\widehat{\rho}) = w_1 \dots w_n$, where 
\begin{equation}
w_i = \# \left\{ \begin{array}{c} \text{squares in Column $i$ weakly below} \\ \text{rooks and laser endpoints}
\end{array}
\right\}
\end{equation}
for $1 \leq i \leq n$.
In the case of Figure~\ref{verticalrook}, we have that
$\lambda(\widehat{\rho}) = 28818825$.

For the map $\lambda$ to be well defined, we need to check that
$\lambda(\widehat{\rho}) = w_1 \dots w_n$ is a rook word for every valid rook placement $\widehat{\rho}$ on
$\widehat{B}_n$.  Since every rook above the bar fires a laser that ends below the bar,
we have that $1 \leq w_i \leq n$ for all $i$.  The first paragraph of the proof of 
Lemma~\ref{same-information} implies that every integer in the interval $[1, w_1]$ appears 
as a letter in $w_1 \dots w_n$.  We conclude that $\lambda(\widehat{\rho}) \in \Rook_n$, as desired.

\begin{theorem}
\label{dominance-theorem}
Let $G \subseteq {[n] \choose 2}$ be a graph.
The composition $\omega \circ \beta \circ \lambda \circ \widehat{\rho}$ induces a bijection from the regions
of $\Ish(G)$ to the regions of $\Shi(G)$ which preserves ceiling partitions and dominance.
\end{theorem}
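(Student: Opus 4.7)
The plan is to reduce the theorem to verifying the properties of $\lambda$, combining it with the already-established properties of the remaining maps in the composition. By Lemma~\ref{rook-reformulation}, $\widehat{\rho}$ bijects regions of $\Ish(G)$ with valid rook placements on $\widehat{B}_n(G)$; by Lemma~\ref{cycle}, $\beta$ is a position-partition-preserving bijection $\Rook_n \to \Park_n$; by Proposition~\ref{parking-shi-characterization}, $\omega$ bijects parking functions whose position partition has arcs in $G$ with regions of $\Shi(G)$, carrying position partition to ceiling partition. Thus it suffices to show $\lambda$ is a bijection from valid rook placements on $\widehat{B}_n(G)$ to rook words whose position partition has arcs in $G$, that $\lambda$ sends the ceiling partition of $R$ to the position partition of $\lambda(\widehat{\rho}(R))$, and finally that the composition preserves dominance.

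For bijectivity and ceiling-partition preservation, I would construct an explicit inverse of $\lambda$: given $w \in \Rook_n$, compute its position partition; for each block $B = \{a_1 < \cdots < a_m\}$ with common value $v_B := w_{a_1}$, place a below-bar rook at $(a_1, v_B)$ and above-bar rooks at $(a_{t+1}, n + a_t)$ for $t = 1, \ldots, m-1$. Non-attacking rows hold because the $v_B$'s are the distinct letters of $w$ and the $a_t$'s are distinct across the partition; square validity above the bar uses $a_t < a_{t+1}$; and the validity condition in Lemma~\ref{rook-reformulation} for the Column~1 rook reduces to the defining rook-word requirement that every integer in $[1, w_1]$ appear as a letter of $w$. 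Restriction to $\widehat{B}_n(G)$ holds iff each arc of the position partition is an edge of $G$. Ceiling-partition preservation is then immediate from the construction: the arcs of the position partition of $w$ are exactly the pairs $(a_t, a_{t+1})$ coming from above-bar rooks, which in turn are exactly the dot links $\epsilon_i \sim \pi_i$ generating the ceiling partition of $R$.

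For dominance, suppose $R$ is dominant in $\Ish(G)$, so $\pi$ is the identity and every below-bar rook of $\widehat{\rho}(R)$ lies on the diagonal. Tracing the vertical-laser construction, each laser chain $c \to \epsilon_c \to \epsilon_{\epsilon_c} \to \cdots$ terminates at the block minimum, so $w := \lambda(\widehat{\rho}(R))$ satisfies $w_i = \min(\text{block of } i)$, whence $w_i \leq i$ and $w_{w_i} = w_i$. These conditions make $w$ simultaneously a parking function and a rook word, so by the Cycle Lemma $\beta(w) = w$. The position partition of $w$ is nonnesting: any two arcs $a_t < a_{t+1}$ and $b_s < b_{s+1}$ from distinct blocks with $a_t < b_s$ force $a_{t+1} < b_{s+1}$, since in the original Ish ceiling diagram the dot values strictly increased left-to-right. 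Proposition~\ref{parking-shi-characterization} then identifies $\omega(w)$ as the dominant Shi region with Shi ceiling diagram $(\mathrm{id}, \Pi(w))$. The reverse direction is symmetric: a dominant Shi region yields a parking function $w$ with the same three properties, hence $w$ is automatically a rook word with $\beta^{-1}(w) = w$, and applying $\lambda^{-1}$ produces a rook placement with diagonal below-bar rooks that reconstructs a dominant Ish region.

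The main technical obstacle is the final step: verifying that the nonnesting condition on $\Pi(w)$ actually guarantees the reconstructed Ish diagram has identity permutation. Concretely, in the inverse construction one must arrange above-bar rooks into dotted positions so that dot values increase left-to-right, and $\pi$ equals the identity only if the column of the rook with the $\ell$-th smallest dot value coincides with the $\ell$-th smallest dotted position. The same implication ``$a_t < b_s \Rightarrow a_{t+1} < b_{s+1}$'' that forbids nesting is precisely what enforces this matching, closing the loop.
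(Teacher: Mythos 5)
Your proposal is correct and follows essentially the same route as the paper: reduce via Lemma~\ref{rook-reformulation}, the Cycle Lemma, and Proposition~\ref{parking-shi-characterization}, invert $\lambda$ by the first-occurrence/previous-occurrence (block-by-block) rook construction, carry ceiling partitions through position partitions, and handle dominance by noting that a dominant Ish region yields a word that is simultaneously a parking function and a rook word (hence fixed by $\beta$) with nonnesting position partition forced by the left-to-right increase of dot values. The only difference is that you also verify the converse direction (dominant Shi region pulls back to a dominant Ish region), a check the paper leaves implicit, and your closing observation that nonnesting is exactly the condition matching dot values to dotted positions is the correct way to close that loop.
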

\begin{proof}
As mentioned above, we immediately reduce to the case $G = K_n$, $\Ish(G) = \Ish(n)$, and
$\Shi(G) = \Shi(n)$.  

We begin by showing that $\lambda$ is a bijection.
By Proposition~\ref{parking-shi-characterization}, the Cycle Lemma,
and Lemma~\ref{rook-reformulation}, this will show that the given composition is a bijection.    If 
$w_1 \dots w_n$ is a rook word, we form a valid rook placement $\widehat{\rho}$ on $\widehat{B}_n$
as follows.  We call the rooks in this rook placement Rooks $1, 2, \dots, n$.
\begin{itemize}
\item Place Rook $1$ in Column $1$ at height $w_1$ (and hence below the bar).
\item For $1 < j \leq n$, assume inductively that we have placed Rooks $1, 2, \dots , j-1$ in
Columns $1, 2, \dots, j-1$ and fired lasers down from those rooks placed above the bar.  
If position $j$ of $w_1 \dots w_n$ contains the first occurrence of the letter $w_j$, then place
Rook $j$ in Column $j$ below the bar at height $w_j$.  Otherwise, let $i$ be the maximal position $< j$ such that
$w_i = w_j$.  Place Rook $j$ in Column $j$ above the bar at height $n+i$.  If $k$ is the 
minimal position such that $w_k = w_j$, fire a laser down from Rook $j$ ending in the row containing
Rook $k$.
\end{itemize}
Since $w_1 \dots w_n$ is a rook word, the rook placement $\widehat{\rho}$ is valid.
It follows that $\lambda(\widehat{\rho}) = w_1 \dots w_n$, so 
$\lambda: \{$valid rook placements on $\widehat{B}_n \} \xrightarrow{\sim} \Rook_n$
is a bijection.

Next, we check that the given composition preserves ceiling partitions.  
Let $R$ be a region of $\Ish(n)$.
By the construction of the 
map $\lambda$, the ceiling partition of $R$ equals the position partition of the rook word
$(\lambda \circ \widehat{\rho})(R)$.  Since the Cycle Lemma map $\beta$ preserves position 
partitions and the map $\omega$ of Proposition~\ref{parking-shi-characterization}
maps position partitions to ceiling partitions, we conclude that the composition
$\omega \circ \beta \circ \lambda \circ \widehat{\rho}$ preserves ceiling partitions.

Finally, we check that the given composition preserves the dominance of regions.  Let $R$ be a 
dominant region of $\Ish(n)$.  This means that every rook in the valid placement 
$\widehat{\rho}(R)$ is either above the bar or on the diagonal
$\{(i, i) \,:\, 1 \leq i \leq n\}$.  In turn, this implies that the rook word 
$(\lambda \circ \widehat{\rho})(R)$ is componentwise $\leq (1, 2, \dots, n)$.  This forces
$(\lambda \circ \widehat{\rho})(R)$ to be a parking function and fixed by $\beta$, so that 
$(\beta \circ \lambda \circ \widehat{\rho})(R) = (\lambda \circ \widehat{\rho})(R)$.
Moreover, the word $(\lambda \circ \widehat{\rho})(R) = w_1 \dots w_n$ has the property that 
for all $i$ occurring in $(\lambda \circ \widehat{\rho})(R)$, the first occurrence of $i$ is at position $i$.
The fact that positive accumulations of dots increase from left to right and occur to the left of $1$
in Ish ceiling diagrams implies that the position partition of $(\lambda \circ \widehat{\rho})(R)$
is nonnesting.  We conclude that the 
Shi ceiling diagram $(\pi, \Pi)$ of
$(\omega \circ \beta \circ \lambda \circ \widehat{\rho})(R) = (\omega  \circ \lambda \circ \widehat{\rho})(R)$
has permutation $\pi_1 \pi_2 \dots \pi_n = 1 2 \dots n$, as desired.
\end{proof}

Continuing the example given by Figures~\ref{ishdiagram} and \ref{verticalrook}, 
if $R$ is the Ish region whose ceiling diagram is shown in Figure~\ref{ishdiagram},
we computed that 
$(\lambda \circ \widehat{\rho})(R) = 28818825$.  The unique parking function in the $\ZZ_9$-orbit
of $28818825$ is $41131147 \in \Park_8$.  We conclude that 
$(\beta \circ \lambda \circ \widehat{\rho})(R) = 41131147$.  Computing the image of this parking function
under $\omega$ shows that 
the composition in Theorem~\ref{dominance-theorem} sends the Ish region $R$ to the Shi region
whose Shi ceiling diagram is $(\pi, \Pi)$, where
$\pi = 23415786 \in \symm_8$ and 
$\Pi = \{ \{1, 2, 5, 8\}, \{3\}, \{4, 6\}, \{7\} \}$.

It can be checked that the bijection $\omega \circ \beta \circ \lambda \circ \widehat{\rho}$ restricts to a bijection
from Ish to Shi regions with $n$ degrees of freedom.  On the level of ceiling diagrams,
this bijection is just the inverse map $\pi \mapsto \pi^{-1}$ on $\symm_n$.

\section{A bijection between the relatively bounded regions of $\Shi(G)$ and $\Ish(G)$ which preserves 
ceiling partitions}
\label{Bounded section}

Let $G \subseteq {[n] \choose 2}$ be a graph.
A region $R$ of a hyperplane arrangement $\A$ is {\sf relatively bounded} if the number of degrees of 
freedom of $R$ is minimal among the regions of $\A$.  (If $\A$ is an essential arrangement, this is
equivalent to $R$ being bounded.)  By slightly modifying the reasoning of the last section, we can
solve a weaker version of Problem~\ref{freedom-problem} by giving a bijection
between the relatively bounded regions of $\Shi(G)$ and $\Ish(G)$ which preserves ceiling partitions.
The full bijective solution to Problem~\ref{freedom-problem}, 
presented in the next section, will restrict to this bijection
on relatively bounded regions.  Since this more general bijection is much more complicated,
we will present this special case first.

Without loss of generality, assume that $G$ is nonempty (if $G = \emptyset$, then 
$\Shi(G) = \Ish(G) = \Cox(n)$).  
This means that the relatively bounded regions
of $\Shi(G)$ or $\Ish(G)$ have  $1$ degree of freedom.  In terms of ceiling diagrams,
 this means that a Shi ceiling diagram $(\pi, \Pi)$ corresponds to a relatively bounded region if and 
 only if $\Pi$ is connected and an Ish ceiling diagram $(\pi, \epsilon)$ corresponds to a relatively
 bounded region if and only if $\pi_1 = 1$ and $\epsilon_n > 0$.  
 In terms of parking functions and rook
 words, we have the following lemma.
 
 \begin{lemma}
 \label{prime-reformulation}
 The map $\omega$ of Proposition~\ref{parking-shi-characterization} restricts to a bijection from the set
 $\Park_n'$ of prime parking functions to the set of relatively bounded regions of $\Shi(n)$.
 The map $(\lambda \circ \widehat{\rho})$ 
 restricts to a bijection from the set of relatively bounded regions of $\Ish(n)$ to the set
 $\Rook_n'$ of prime rook words.
 \end{lemma}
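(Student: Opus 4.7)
The plan is to translate ``relatively bounded'' into a combinatorial condition on ceiling diagrams via Propositions~\ref{shi-ceiling-diagram-characterization} and~\ref{ish-ceiling-diagram-characterization}, and then to push that condition through $\omega$ and $\lambda \circ \widehat{\rho}$ to recover the defining inequalities of prime parking functions and prime rook words. Throughout I assume $n \geq 2$, so that the minimum number of degrees of freedom of any region is $1$.

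For the Shi claim, Proposition~\ref{shi-ceiling-diagram-characterization} says that a region $R$ with ceiling diagram $(\pi,\Pi)$ is relatively bounded iff the nonnesting partition $\Pi$ is connected. Given $w = \omega^{-1}(R) \in \Park_n$, the construction of $\omega$ makes each letter $m$ appear in $w$ with multiplicity equal to the size of the block of $\Pi$ whose minimum is $m$; hence $f(k) := \#\{i : w_i \le k\}$ equals the total size of the blocks of $\Pi$ with minimum at most $k$. Since positions $1,\dots,k$ lie automatically in such blocks, $f(k) = k$ iff those blocks sit inside $[1,k]$, i.e., iff $\Pi$ has a ``split'' at $k$ separating its connected components. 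Thus $\Pi$ is connected iff $f(k) \ne k$ for every $1 \le k \le n-1$, and combined with the parking inequality $f(k) \ge k$ this is equivalent to $f(k) \ge k+1$ for every $1 \le k \le n-1$---precisely the defining condition of a prime parking function.

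For the Ish claim, the degree-of-freedom formula in Proposition~\ref{ish-ceiling-diagram-characterization} shows that $R$ with ceiling diagram $(\pi,\epsilon)$ is relatively bounded iff $\pi_1 = 1$ and $\epsilon_n > 0$. I will then establish, for $w = (\lambda \circ \widehat{\rho})(R)$, two equivalences: (a) $w_1 = 1 \iff \pi_1 = 1$, and (b) every letter of $w$ lies in $[n-1] \iff \epsilon_n > 0$. For (a), Proposition~\ref{ish-ceiling-diagram-characterization} forces $\epsilon_{\pi^{-1}_1} = 0$, so Rook~$1$ sits at $(1,\pi^{-1}_1)$ below the bar and no laser occupies column~$1$, giving $w_1 = \pi^{-1}_1$. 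For (b), a letter of $w$ equals $n$ iff some column's rook or laser endpoint lies in row~$n$; the only candidate below-bar rook in row~$n$ is Rook~$\pi_n$, which occupies that cell precisely when $\epsilon_n = 0$, and since the recursive vertical laser construction places every laser endpoint in the row of some below-bar rook, no endpoint can land in row~$n$ unless some below-bar rook already does. Combining (a) and (b) identifies the image of the relatively bounded regions of $\Ish(n)$ under $\lambda \circ \widehat{\rho}$ as exactly $\Rook_n'$, and the restriction is a bijection because $\lambda \circ \widehat{\rho}$ is already a bijection onto $\Rook_n$ by Theorem~\ref{dominance-theorem}.

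The main obstacle is part~(b) of the Ish analysis: one must confirm that the chained laser endpoints never accidentally reach row~$n$ when no below-bar rook does. I expect to handle this by induction on the above-bar rooks in left-to-right order, with the inductive step using that each laser endpoint either lies in the row of a below-bar rook or coincides with a previously placed laser endpoint satisfying the inductive hypothesis.
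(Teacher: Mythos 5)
Your proposal is correct and follows essentially the same route as the paper: translate relative boundedness into the ceiling-diagram conditions ($\Pi$ connected for $\Shi(n)$; $\pi_1=1$ and $\epsilon_n>0$ for $\Ish(n)$) and then push these through $\omega$ and $\lambda\circ\widehat{\rho}$ to recover primality of the parking function and rook word. You merely supply details the paper leaves implicit, namely the counting argument via $f(k)$ on the Shi side and the induction showing laser endpoints always land in rows of below-the-bar rooks on the Ish side.
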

 
 \begin{proof}
 If $(\pi, \Pi)$ is the ceiling diagram of a relatively bounded Shi region, then the set partition
 $\Pi$ is connected.  This means that for every initial segment $[1, i]$ of $[n]$ with $i > 1$, the number 
 of blocks of $\Pi$ containing at least one of the elements in $[1, i]$ is strictly less than $i$.  This implies
 that $\omega$ sends prime parking functions to relatively bounded Shi regions.
 
A region $R$ of $\Ish(n)$ is relatively bounded if and only if the Ish ceiling diagram $(\pi, \epsilon)$
of $R$ satisfies $\pi_1 = 1$ and $\epsilon_n > 0$.   In terms of rook placements, this means that
the rook in Column 1 of $\widehat{\rho}(R)$ is in Row 1 and that there are no rooks in
Row $n$ of $\widehat{\rho}(R)$.  This means that the rook word $(\lambda \circ \widehat{\rho})(R)$ is prime.
The fact that every prime rook word corresponds to a relatively bounded Ish region is proven using
the same line of reasoning as in the proof of Theorem~\ref{dominance-theorem}.
 \end{proof}

Lemma~\ref{prime-reformulation} implies the following proposition.

\begin{proposition}
\label{bounded-proposition}
Let $G \subseteq {[n] \choose 2}$ be a graph.  The composition
$\omega \circ \beta' \circ \lambda \circ \widehat{\rho}$ induces a bijection from the relatively bounded
regions of $\Ish(G)$ to the relatively bounded regions of $\Shi(G)$ which preserves ceiling partitions.
\end{proposition}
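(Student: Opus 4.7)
The plan is to restrict each arrow of the composition $\omega \circ \beta' \circ \lambda \circ \widehat{\rho}$ to the appropriate subset and verify that ceiling partitions survive the round trip. By Lemma~\ref{prime-reformulation}, the map $\lambda \circ \widehat{\rho}$ restricts to a bijection from the relatively bounded regions of $\Ish(n)$ to the set $\Rook_n'$ of prime rook words, while $\omega$ restricts to a bijection from $\Park_n'$ to the relatively bounded regions of $\Shi(n)$; the Prime Cycle Lemma supplies a bijection $\beta' : \Rook_n' \to \Park_n'$. Composing these three pieces handles the case $G = K_n$; the case $G = \emptyset$ is degenerate (every region of $\Cox(n)$ has $n$ degrees of freedom and $\Shi(\emptyset) = \Ish(\emptyset) = \Cox(n)$), so throughout I would assume $G$ is nonempty, and hence ``relatively bounded'' means ``one degree of freedom.''

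The substantive step is verifying that ceiling partitions are preserved by the composition, which I would handle by tracking the position/ceiling partition through each arrow in the manner of Theorem~\ref{dominance-theorem}. The key observation is that the ceiling partition of an Ish region $R$ coincides with the position partition of the rook word $(\lambda \circ \widehat{\rho})(R)$: by the construction of $\lambda$, two columns receive the same letter precisely when their rooks either share a below-the-bar row or are tied via a chain of above-the-bar rooks and their vertical lasers to a common below-the-bar row, and this matches exactly the equivalence defining the ceiling partition from $(\pi, \epsilon)$ in Proposition~\ref{ish-ceiling-diagram-characterization}. The map $\beta'$ preserves position partitions (as noted following Lemma~\ref{cycle-prime}), and by Proposition~\ref{parking-shi-characterization} the map $\omega$ carries the position partition of a parking function to the ceiling partition of the corresponding Shi region. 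Chaining these three identifications shows that the ceiling partition of $R$ agrees with that of its image under the full composition.

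Once ceiling partitions are preserved, the passage from $K_n$ to an arbitrary nonempty $G$ is immediate. A region of $\Ish(G)$ (respectively, $\Shi(G)$) is identified with a region of $\Ish(n)$ (respectively, $\Shi(n)$) whose ceiling partition has all arcs in $G$, and degrees of freedom are intrinsic to the recession cone of the region, so ``relatively bounded in $\Ish(G)$'' coincides with ``one degree of freedom in $\Ish(n)$ and ceiling partition arcs in $G$.'' Because the composition preserves both the relatively bounded condition (via the prime parking function versus prime rook word identification of Lemma~\ref{prime-reformulation}) and the ceiling partition, it restricts cleanly to the required bijection on $\Ish(G)$ and $\Shi(G)$. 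The principal obstacle in this plan is the middle paragraph: making rigorous the matching of the position partition of $(\lambda \circ \widehat{\rho})(R)$ with the ceiling partition of $R$ by a careful analysis of the vertical laser chains in $\lambda$; once that is in place, everything else is an assembly of previously established preservation and restriction properties.
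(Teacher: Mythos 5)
Your proposal is correct and follows essentially the same route as the paper: reduce to $G = K_n$, apply Lemma~\ref{prime-reformulation} together with the Prime Cycle Lemma to get the bijection on relatively bounded regions, and chain the ceiling-partition/position-partition identifications exactly as in the proof of Theorem~\ref{dominance-theorem}. The paper's proof is just a terser version of the same argument, so no further changes are needed.
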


\begin{proof} 
Without loss of generality, we assume that $G = K_n$.  By Lemma~\ref{prime-reformulation} and the Prime
Cycle Lemma, the given composition is a bijection which preserves ceiling partitions.
\end{proof}

The bijection in Proposition~\ref{bounded-proposition} does {\it not} preserve dominance because 
the prime cycle map $\beta'$ does not preserve dominance at the level of words.  
Indeed, this bijection could not preserve dominance because $\Shi(3)$ has $2$ dominant relatively
bounded regions and $\Ish(3)$ has $3$ dominant relatively bounded regions.

\section{A bijection between the regions of $\Shi(G)$ and $\Ish(G)$ which preserves ceiling partitions
and degrees of freedom}
\label{Freedom section}

\subsection{Parking functions as labeled Dyck paths}

\begin{figure}
\centering
\includegraphics[scale = 0.5]{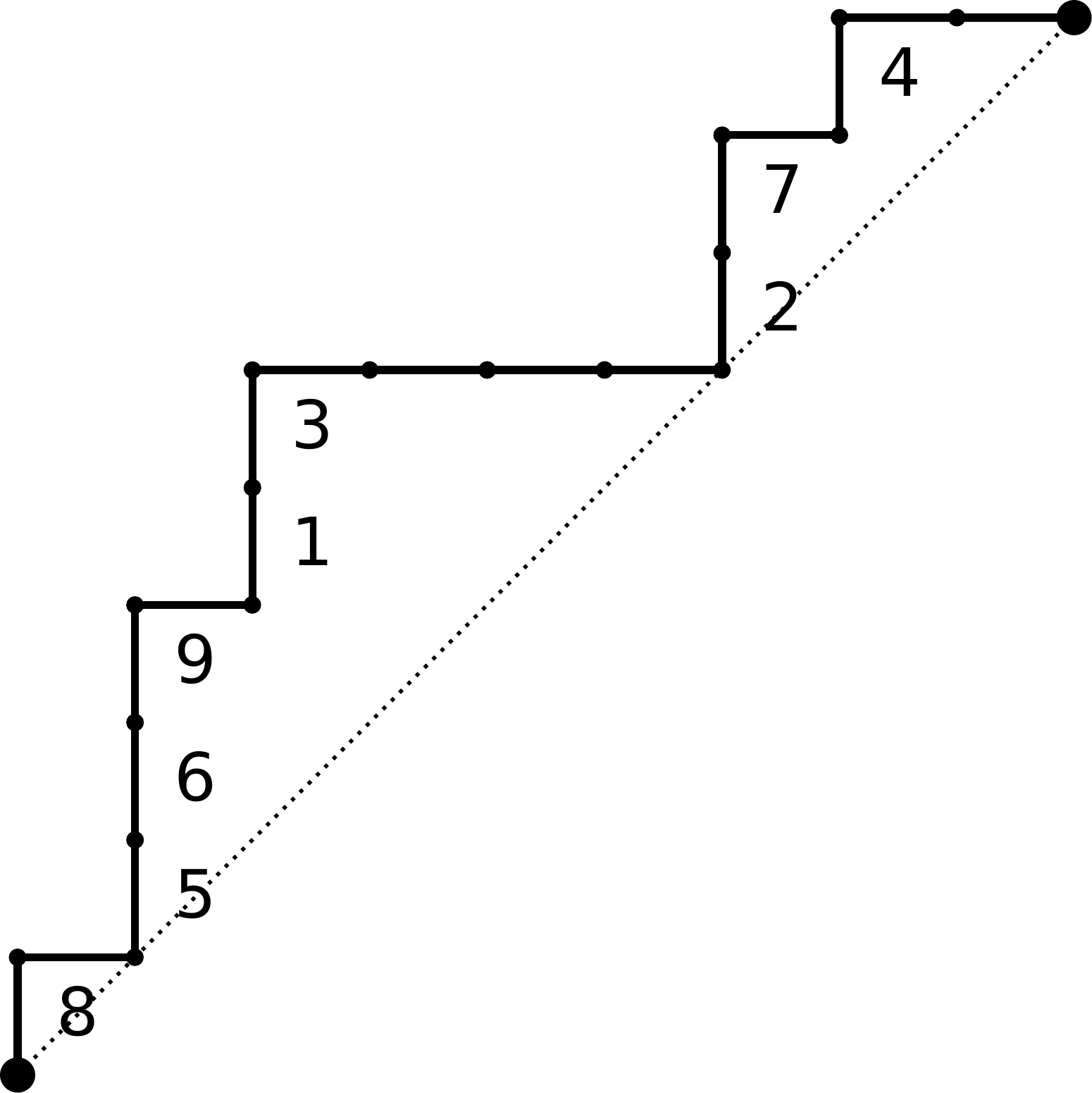}
\caption{A labeled Dyck path of size $9$.}
\label{dyckparking}
\end{figure}

In order to solve Problem~\ref{freedom-problem}, we will need to think of parking functions in a different
way.
A {\sf Dyck path of size $n$} is a lattice path in $\ZZ^2$ starting at $(0, 0)$, ending at $(n, n)$, consisting of
north steps $(0, 1)$ and east steps $(1, 0)$, and staying weakly above the diagonal $y = x$.  
Figure~\ref{dyckparking} shows a Dyck path of size $9$.  

A {\sf vertical run} in a Dyck path $D$ is a maximal contiguous sequence of north steps.  A {\sf labeled Dyck 
path} is a Dyck path $D$ of size $n$ with the north steps labeled by letters in $[n]$ such that every letter
appears exactly once as a label and the labels increase going up vertical runs.  A labeled Dyck path of size
$9$ is shown in Figure~\ref{dyckparking}.

There is a standard bijection between labeled Dyck paths of size $n$ and $\Park_n$ obtained by letting a 
labeled Dyck path $D$ correspond to the parking function $w_1 \dots w_n$, where $w_i$ is one plus
the $x$-coordinate of the column in which $i$ appears.  For example, the labeled Dyck path
in Figure~\ref{dyckparking} corresponds to the parking function
$373822712$.  Under this correspondence, the position partition of a parking function is the set partition
of $[n]$ defined by $i \sim j$ if and only if $i$ and $j$ label the same vertical run of $D$.

A {\sf return} of a (labeled) Dyck path $D$ is a diagonal point $(i, i)$ on $D$ other than
$(0, 0)$ or $(n, n)$.  The labeled Dyck path in Figure~\ref{dyckparking} has $2$ returns.  We call
$D$ {\sf prime} if it has no returns.  More generally, we call the prime Dyck paths defined by the returns of 
$D$ the {\sf prime components} of $D$.  The labeled Dyck path in Figure~\ref{dyckparking} has $3$ prime 
components.  The relationship between prime components and degrees of freedom is as follows.

\begin{lemma}
\label{prime-dyck}
Let $w = w_1 \dots w_n \in \Park_n$ and let $D$ be the corresponding labeled Dyck path.  If $\omega$ is the map
of Proposition~\ref{parking-shi-characterization}, the number of degrees of freedom of $\omega(w)$ equals
the number of prime components of $D$.
\end{lemma}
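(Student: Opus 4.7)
The plan is to show directly that the number of connected components of the nonnesting partition $\Pi$ in the Shi ceiling diagram $(\pi, \Pi)$ of $\omega(w)$ equals the number of prime components of the labeled Dyck path $D$ associated to $w$. Since Proposition~\ref{shi-ceiling-diagram-characterization} identifies the former count with the number of degrees of freedom of $\omega(w)$, this yields the lemma. Write the connected component decomposition of $\Pi$ as $[n] = J_1 \sqcup \cdots \sqcup J_d$ with $J_k = \{j_{k-1}+1, \dots, j_k\}$ an interval (so $0 = j_0 < j_1 < \cdots < j_d = n$).

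Prime components of $D$ are separated by returns, i.e., diagonal points $(m,m)$ on $D$ with $0 < m < n$, so the number of prime components equals one plus the number of such returns. It therefore suffices to show that $(m,m)$ is a return of $D$ if and only if $m \in \{j_1, \dots, j_{d-1}\}$. In the standard Dyck/parking dictionary, column $w_i$ houses the north step labeled $i$, so the path arrives at $(m, \#\{i : w_i \leq m\})$ after its $m$-th east step. Thus $(m,m)$ is a return precisely when $\#\{i : w_i \leq m\} = m$.

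Now invoke the description of $\omega$: for each label $i$ we have $w_i = \min(B)$, where $B$ is the block of $\Pi$ containing $\pi^{-1}(i)$. Because $\min(B) \leq \pi^{-1}(i)$, we always get $\{i : \pi^{-1}(i) \leq m\} \subseteq \{i : w_i \leq m\}$, and since $\pi \in \symm_n$ is a bijection, the left-hand set has cardinality exactly $m$. The inclusion is an equality (and the return condition holds) exactly when no $i$ satisfies $\min(B) \leq m < \pi^{-1}(i)$ for its block $B$; that is, exactly when no block of $\Pi$ has elements in both $\{1,\dots,m\}$ and $\{m+1,\dots,n\}$.

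Finally, the integers $m$ for which no block of $\Pi$ straddles the gap between $m$ and $m+1$ are exactly those for which $\{1,\dots,m\}$ is a union of blocks of $\Pi$; equivalently, those for which $\bigl\{\{1,\dots,m\},\{m+1,\dots,n\}\bigr\}$ is an interval partition refined by $\Pi$. Since the connected component decomposition is by definition the \emph{finest} interval partition refined by $\Pi$, every such interval partition is coarser than $\{J_1,\dots,J_d\}$, so these $m$ are exactly $\{j_1,\dots,j_{d-1},n\}$. Excluding $m = n$ gives precisely the returns of $D$, hence $D$ has $d-1$ returns and $d$ prime components, as desired. The only subtle step is the translation between the ``non-straddling'' integers $m$ and the right endpoints of the connected components, and this is where one must invoke the minimality characterization of connected components recalled in Section~\ref{Background}.
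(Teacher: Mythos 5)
Your argument is correct. Note that the paper itself leaves this lemma's proof to the reader, so there is no official proof to compare against; your route is the natural one, and it is complete: you correctly reduce via Proposition~\ref{shi-ceiling-diagram-characterization} to counting connected components of $\Pi$, characterize returns of $D$ by the equality $\#\{i : w_i \le m\} = m$, translate this through the defining rule $w_i = \min(B)$ of the Athanasiadis--Linusson labeling into the statement that no block of $\Pi$ straddles the cut at $m$, and then correctly identify the non-straddling cuts with the right endpoints $j_1, \dots, j_{d-1}$ of the connected components (the meet of two interval partitions refined by $\Pi$ is again such a partition, so the unique finest one is genuinely the minimum, as you implicitly use). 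This fills the gap the authors left exactly as intended.
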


The proof of Lemma~\ref{prime-dyck} is left to the reader.  We will identify parking functions and labeled Dyck 
paths from now on.

\subsection{The $\gamma$ map}  Viewing parking functions as labeled Dyck paths, we will define a bijection
$\gamma: \{$regions of $\Ish(n)\} \rightarrow \Park_n$ which preserves ceiling partitions and degrees 
of freedom (and hence restricts appropriately to subgraphs $G$ of $K_n$).
It will be easier to describe the inverse map $\Park_n \rightarrow \{$regions of $\Ish(n)\}$; we  denote
 this map by $\delta$.
 
 Let $D$ be a parking function of size $n$, thought of as a labeled Dyck path.  To construct the 
 Ish diagram
 $\delta(D)$,
 we will perform a recursive procedure using the prime components of $D$.  The prime component containing
 the label $1$ will be treated in a special way.
 
 Let $C_1, C_2, \dots, C_d$ be the prime components of $D$ listed so that the prime component $C_1$ contains
 the label $1$ and the prime components are cyclically listed from left to right.  Let $k_i$ be the size 
 of the Dyck path underlying the prime component $C_i$.
 The map $\delta$ will treat the component $C_1$ differently from the other components.
 We initialize the 
image $\delta(D)$ to be the empty word.
 
 We start with the prime
 component $C_1$.  
 If $C_1$ only contains the label $1$ (i.e., if the Dyck path underlying $C_1$ has size $1$),
 we update $\delta(D)$ by setting $\delta(D) = 1$. 
  Otherwise, the prime component $C_1$ has labels other than $1$.
 Starting at the column of $C_1$ containing $1$, moving cyclically to the right within $C_1$,
 and ignoring the final (necessarily empty) column of $C_1$, we write either the lowest label in that column 
 or a diamond (`$\diamond$') if that column has no labels.  This gives
 a word of length $k_1-1$.  In the case of Figure~\ref{dyckparking}, this word is
 $1 \diamond \diamond$ $5$.  We replace $\delta(D)$ by this word with a single additional $\diamond$ at the end.
 At this point, the word $\delta(D)$ has length $k_1$.
 In the case of Figure~\ref{dyckparking} we have 
 $\delta(D) = 1 \diamond \diamond$ $5$ $\diamond$.  
 
 Next we look at the prime component $C_2$.  We build a 
 word of length $k_2$ as follows.  Reading the columns of $C_2$ from right to left, we record 
 either the lowest label in that column or a $\diamond$ if that column has no labels.  
 In Figure~\ref{dyckparking}, the word so obtained is given by $2$ $4$ $\diamond$.  Unlike in the case of $C_1$,
 there is no `cycling' involved in writing down this word; we start at the leftmost column of $C_2$ and move right.
 We replace $\delta(D)$ by the word obtained by inserting the size $k_2$ word for $C_2$ just after 
 the first letter of $\delta(D)$.  At this point $\delta(D)$ has length $k_1 + k_2$.  In the case of 
 Figure~\ref{dyckparking}, we have that $\delta(D) = 1$ $2$ $4$ $\diamond \diamond \diamond$ $5$ $\diamond$.
 
 At stage $i$ for $i > 1$, we start with a word $\delta(D)$ of length $k_1 + \cdots + k_{i-1}$.  We build a
 word of length $k_i$ from the component $C_i$ by reading the lowest labels in the columns of $C_i$
 from left to right, or a $\diamond$ if that column has no labels.  We replace $\delta(D)$ by the word
 of length $k_1 + \cdots + k_{i-1} + k_i$ formed by placing this word of length $k_i$ just after the 
 $(i-1)^{st}$ position of $\delta(D)$.  In the case of Figure~\ref{dyckparking}, this gives the word
$\delta(D) = 1$ $2$ $8$ $4$ $\diamond \diamond \diamond$ $5$ $\diamond$.

At this point, the first $d$ letters of $\delta(D)$ are not diamonds and give labels in the $d$ prime 
components of $D$, starting with $1$ read cyclically from left to right.  We replace $\delta(D)$ 
with the word obtained by cyclically rotating these first $d$ letters to the left by one position.  
In the example of Figure~\ref{dyckparking}, we have that $d = 3$ and
$\delta(D) = 2$ $8$ $1$ $4$ $\diamond \diamond \diamond$ $5$ $\diamond$.

At this stage, we know that $1$ occurs in position $d$ of $\delta(D)$.  We cyclically rotate the {\em entire} 
word $\delta(D)$ to the left until the only entries occurring before $1$ in $\delta(D)$ label prime 
components to the left of $1$ in $D$.  In Figure~\ref{dyckparking}, this gives 
$\delta(D) = 8$ $1$ $4$ $\diamond \diamond \diamond$ $5$ $\diamond$ $2$.

Finally, we replace $\delta(D)$ with the unique Ish ceiling diagram $(\pi, \epsilon)$ such 
that $\pi \in \symm_n$ agrees with $\delta(D)$ in the non-diamond positions and 
the ceiling partition of $(\pi, \epsilon)$ agrees with the ceiling partition of $D$.
In the case of Figure~\ref{dyckparking}, this yields 
$\delta(D) = (\pi, \epsilon)$, where
$\pi = 8 1 4 3 7 6 5 9 2$ and $\epsilon = 0 0 0 1 2 5 0 6 0$. 

The following result solves Problem~\ref{freedom-problem}.

\begin{theorem}
\label{freedom-theorem}
The map $\delta$ is invertible and its inverse $\gamma: \{$regions of $\Ish(n) \} \rightarrow \Park_n$
has the property that $\omega \circ \gamma$ is a bijection from regions of $\Ish(n)$ to regions of 
$\Shi(n)$ which preserves ceiling partitions and degrees of freedom.  Therefore,
if $G \subseteq {[n] \choose 2}$ is a graph, the composition $\omega \circ \gamma$ restricts to a bijection
from regions of $\Ish(G)$ to regions of $\Shi(G)$ which preserves ceiling partitions and degrees of freedom.
\end{theorem}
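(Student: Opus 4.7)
The plan is to prove that $\delta$ is a bijection from $\Park_n$, viewed as labeled Dyck paths, to the regions of $\Ish(n)$, and that the composition $\omega \circ \gamma$ (where $\gamma = \delta^{-1}$) preserves both ceiling partitions and degrees of freedom. The restriction to any subgraph $G \subseteq {[n] \choose 2}$ is then automatic: by Propositions~\ref{parking-shi-characterization} and \ref{ish-ceiling-diagram-characterization}, regions of $\Shi(G)$ and $\Ish(G)$ are precisely those whose ceiling partitions have all arcs in $G$, so any ceiling-partition-preserving bijection between $\Shi(n)$ and $\Ish(n)$ restricts as required. Hence one only needs to handle the case $G = K_n$.

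The first step is a structural analysis of $\delta$. Let $D$ have prime components $C_1, C_2, \ldots, C_d$ (with $C_1$ containing the label $1$ and the others listed cyclically to its right), and let $m$ denote the position of $C_1$ in the actual left-to-right ordering of prime components. By induction on the number of insertions, the intermediate word obtained after all components have been processed but before the two rotations has the block form $[f_1, f_2, \ldots, f_d, r_d, r_{d-1}, \ldots, r_1]$, where $f_i$ is the first label of $C_i$ and $r_i$ has length $k_i - 1$ and records the remaining columns of $C_i$. A key point is that, outside the all-staircase case, the last entry of this intermediate word is always a diamond: if $k_1 \geq 2$ this is the trailing diamond explicitly appended to $C_1$'s word; otherwise it comes from the empty last column of the first $C_j$ with $k_j \geq 2$, which must be empty because primality of $C_j$ forces the path to enter its last column from the east. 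Tracking the two rotations then shows that in $\delta(D)$ the letter $1$ lies at position $m$ and the rightmost diamond at position $n - d + m$. These facts, together with the strict left-to-right order on $\epsilon$-values forced by the final step, show $\delta(D)$ satisfies the hypotheses of Proposition~\ref{ish-ceiling-diagram-characterization}.

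Preservation of ceiling partitions is immediate from the construction: the final step of $\delta$ picks $(\pi, \epsilon)$ precisely so that its ceiling partition agrees with that of $D$; uniqueness of this choice follows because the non-diamond letters of the pre-rotation word are the column-minima of the blocks of $D$'s ceiling partition, so the diamond positions receive the remaining labels under the rigid left-to-right $\epsilon$-ordering required by Proposition~\ref{ish-ceiling-diagram-characterization}. Since $\omega$ carries position partitions of parking functions to ceiling partitions of Shi regions (Proposition~\ref{parking-shi-characterization}), $\omega \circ \gamma$ preserves ceiling partitions. For degrees of freedom, Lemma~\ref{prime-dyck} identifies the Shi count as $d$, while Proposition~\ref{ish-ceiling-diagram-characterization} gives the Ish count as $n - k + \pi^{-1}_1$ where $k$ is the rightmost nonzero position of $\epsilon$. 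From the position tracking above, $\pi^{-1}_1 = m$ and $k = n - d + m$, so $n - k + \pi^{-1}_1 = d$; the staircase edge case is handled by the convention $k = \pi^{-1}_1$, yielding $n$ degrees of freedom on both sides.

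To invert $\delta$, given $(\pi, \epsilon)$ I would first read off $d = n - k + \pi^{-1}_1$ and $m = \pi^{-1}_1$, undo the two rotations (cyclic right shift the entire word by $d - m$, then cyclic right shift the first $d$ positions by one), recover the block arrangement $[f_1, \ldots, f_d, r_d, \ldots, r_1]$, decode the prime-component sizes $k_1, \ldots, k_d$ from the boundaries between the $r_i$ blocks, and finally use the ceiling partition of $(\pi, \epsilon)$ to fill in the columns of the resulting labeled Dyck path. The main obstacle will be verifying that the sizes $k_1, \ldots, k_d$ and the within-component column orderings are uniquely recoverable from the block arrangement: this requires matching the trailing-diamond marker from the cycled word of $C_1$ with the left-to-right reading used for $C_2, \ldots, C_d$ in the correct order, a somewhat delicate but essentially bookkeeping argument. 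Once this is in hand, $\delta$ is a bijection and the theorem follows.
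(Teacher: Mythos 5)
Your structural analysis of $\delta$ is correct and follows the same route as the paper: the pre-rotation word has the block form $[f_1,\dots,f_d,r_d,\dots,r_1]$, its last entry is a diamond outside the staircase case, and tracking the two rotations gives $\pi^{-1}_1=m$ and rightmost dotted position $n-d+m$, so that Proposition~\ref{ish-ceiling-diagram-characterization} yields $n-k+\pi^{-1}_1=d$ while Lemma~\ref{prime-dyck} gives $d$ on the Shi side; ceiling partitions are preserved by the final step, and the restriction to arbitrary $G$ is automatic exactly as you say. This part is at least as explicit as the paper's own argument.

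The genuine gap is in the invertibility of $\delta$, which is the heart of the statement. You propose to ``decode the prime-component sizes $k_1,\dots,k_d$ from the boundaries between the $r_i$ blocks,'' but those boundaries are not marked in the word: the $r_i$ are unpunctuated strings of labels and diamonds, so the $k_i$ cannot be read off from the word alone, and deferring this as ``bookkeeping'' leaves the inverse undefined precisely at the step where the work lies. The missing idea, which the paper supplies, is to use the ceiling partition \emph{before} reconstructing columns: over each number in the de-rotated word draw a vertical run whose length is the size of its ceiling-partition block followed by one east step, over each diamond a single east step, and then peel off $C_d, C_{d-1},\dots,C_2$ by starting at positions $d, d-1,\dots,2$ and reading the resulting lattice path until it first returns to the diagonal (primality of each component is what makes this return detect the correct boundary), deleting the used symbols at each stage. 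Recovering $C_1$ needs the further fact that the leftover labeled path, after deleting the trailing diamond, has a \emph{unique} cyclic rotation which is a prime Dyck path missing its final east step. Without these ingredients (or an equivalent argument --- note that since both sets have cardinality $(n+1)^{n-1}$, injectivity of $\delta$ would suffice, but proving injectivity requires the same decoding), the claim that $\delta$ is invertible is not established, so your proposal stops short of the theorem at its key point.
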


\begin{proof}
Let $D \in \Park_n$ be a parking function thought of as a labeled Dyck path with $d$ prime components.  
In the construction of the 
Ish ceiling diagram
$\delta(D)$, the diamonds will ultimately be replaced by letters with dots over them.  After the length
of $\delta(D)$ reaches $n$ and the first $d$ letters are cycled, the distance between $1$ and the 
final diamond of $\delta(D)$ is $n-d+1$.  This distance is preserved by the overall cyclic shift of 
$\delta(D)$ that follows.  
By Lemma~\ref{prime-dyck},
it follows that the final Ish ceiling diagram $\delta(D)$ has $d$ degrees of freedom,
so that $\delta$ preserves the degrees of freedom statistic.  The map $\delta$ preserves ceiling partitions
by construction.  We are reduced to showing that $\delta$ is a bijection.

The inverse map $\gamma$ of $\delta$ is defined by running the algorithm defining $\delta$ in reverse.  
We will describe how to construct the image $\gamma(R)$, where $R$ is the image $\delta(D)$ of the 
labeled Dyck path of Figure~\ref{dyckparking} and leave it to the reader to check that $\gamma$
is a well defined inverse of $\delta$.

Let $R$ be the region of $\Ish(n)$ whose Ish ceiling diagram is $(\pi, \epsilon)$, where
$\pi = 8 1 4 3 7 6 5 9 2$ and $\epsilon = 0 0 0 1 2 5 0 6 0$.  To construct $\gamma(R)$, we first make
note of the ceiling partition of $R$ (in this case 
$\{ \{1, 3 \}, \{2, 7\}, \{4\}, \{5, 8, 9\}, \{8\} \}$) and the number of degrees of freedom of $R$
(in this case $3$).  

Next, we initialize a word $w$
   by replacing every entry $\pi_i$ of $\pi$ with $\epsilon_i > 0$ with a $\diamond$.
In this case, we have the word $w = 8$ $1$ $4$ $\diamond \diamond \diamond$ $5$ $\diamond$ $2$.

Since $1$ never has dots above it in an Ish ceiling diagram, the number $1$ necessarily appears 
among the symbols of $w$.
If the region $R$ has $d$ degrees of freedom, we cycle the word $w$ to the right until
the number $1$ is in position $d$.  
We keep track of the number of times we needed to cycle to do this; call this number 
in $\{0, 1, \dots, d-1\}$
the {\sf cycle index}.
In our case, we cycle right once to get a word with $1$ in position $3$, yielding
$w = 2$ $8$ $1$ $4$ $\diamond \diamond \diamond$ $5$ $\diamond$ and the cycle 
index is $1$.
Since the number of degrees of freedom of an Ish region is $n+1$ minus (the number of entries weakly
between $1$ and the last dotted entry in the corresponding Ish ceiling diagram), this cycling procedure
never cycles any $\diamond$ symbols around the end of $w$ and 
results in a word whose first $d$ symbols are numbers rather than diamonds.  In our case, these 
symbols are the prefix $2$ $8$ $1$.

The next step is to rotate the first $d$ entries of $w$ one unit to the right (while leaving the 
remaining entires in $w$ unaltered).  At this point, the word $w$ starts with $1$ and the first
$d$ positions in $w$ are numbers.  In our case, we have
$w = 1$ $2$ $8$ $4$ $\diamond \diamond \diamond$ $5$ $\diamond$.

Using the word $w$, we will construct the prime components 
$C_d, C_{d-1}, \dots, C_1$
of the labeled Dyck path $\gamma(R)$ (with indices in that order), together
with the {\it cyclic} order in which they appear from left to right.  The {\it linear} order of these prime components
(and hence $\gamma(R)$ itself)
will be determined by the cycle index after the prime components are constructed.  The construction
of the final component $C_1$ will be different from the construction of the other prime components
$C_d, C_{d-1}, \dots, C_2$.

By construction, the numbers appearing in $w$ are precisely the minimal elements in the blocks
of the ceiling partition of $R$.  Above every number in $w$, draw a vertical run with steps labeled by 
elements of the corresponding ceiling partition block, together with a single east step.  Above every
diamond, draw a single east step.  In our case, we obtain the following figure.

\begin{center}
\includegraphics[scale = 0.5]{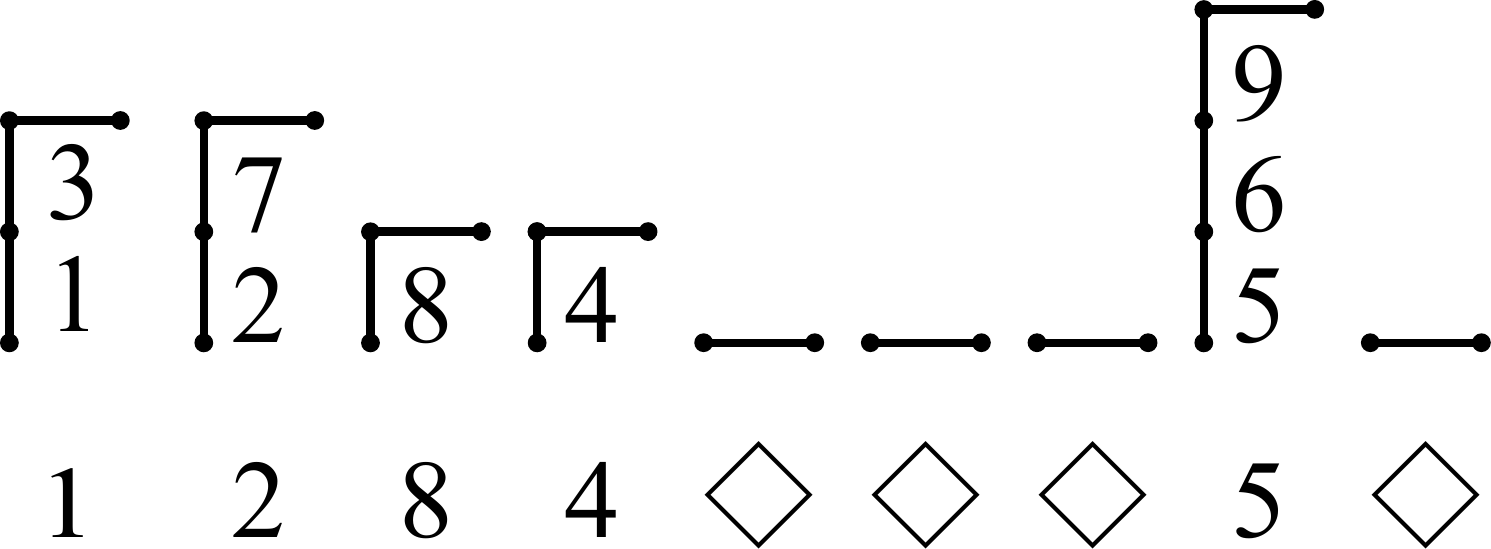}
\end{center}

To form the first prime component $C_d$, start at the $d^{th}$ position in this figure (which must necessarily 
contain a nonempty vertical run over a number).  Read from right to left, building up a labeled lattice
path starting on $y = x$ along the way, 
until one returns to the line $y = x$.  The labeled lattice path so obtained is the 
prime component $C_d$.  Delete the symbols involved with $C_d$ from the word $w$.  In our case, 
(where $d = 3$)
the prime component $C_3$ and the new word $w$ so obtained are shown below.  

\begin{center}
\includegraphics[scale = 0.5]{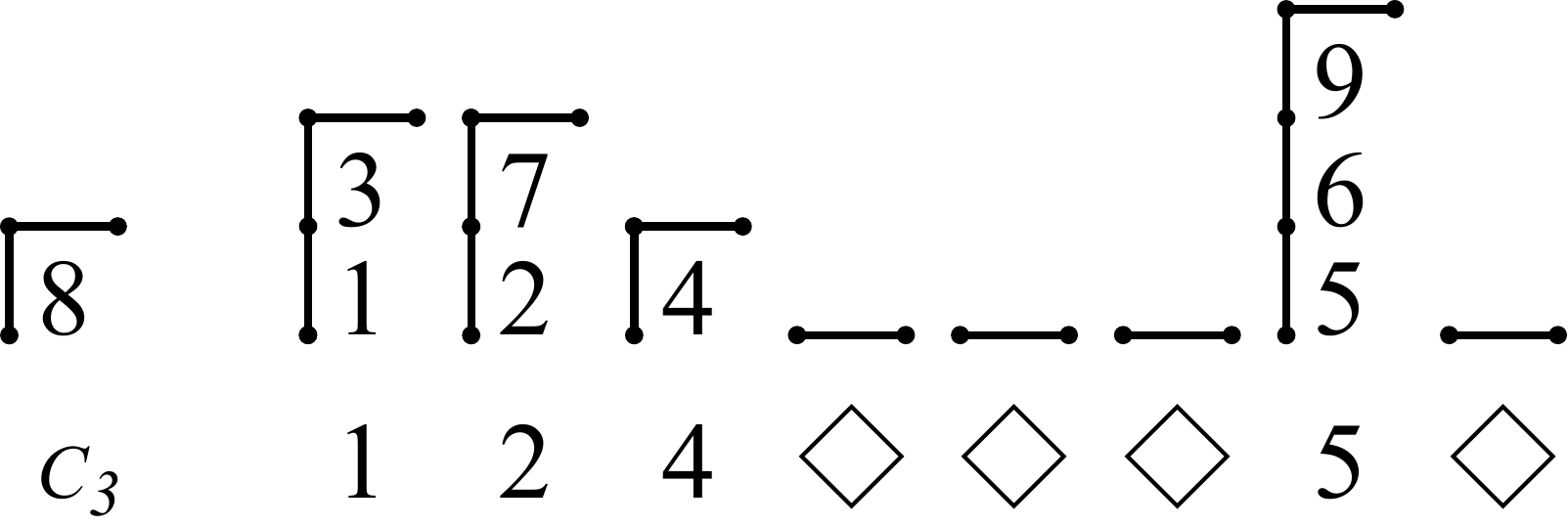}
\end{center}

To form the second prime component $C_{d-1}$, start at the $(d-1)^{st}$ position in this figure
(which must necessarily contain a nonempty vertical run over a number).  As with the component 
$C_d$, to form $C_{d-1}$ read the lattice path above $w$ from left to right until one returns to the diagonal.  
Delete the symbols involved in $C_{d-1}$ from $w$.  In our example, the prime components $C_3$ and
$C_2$ and the new word $w$ so obtained are as follows.

\begin{center}
\includegraphics[scale = 0.5]{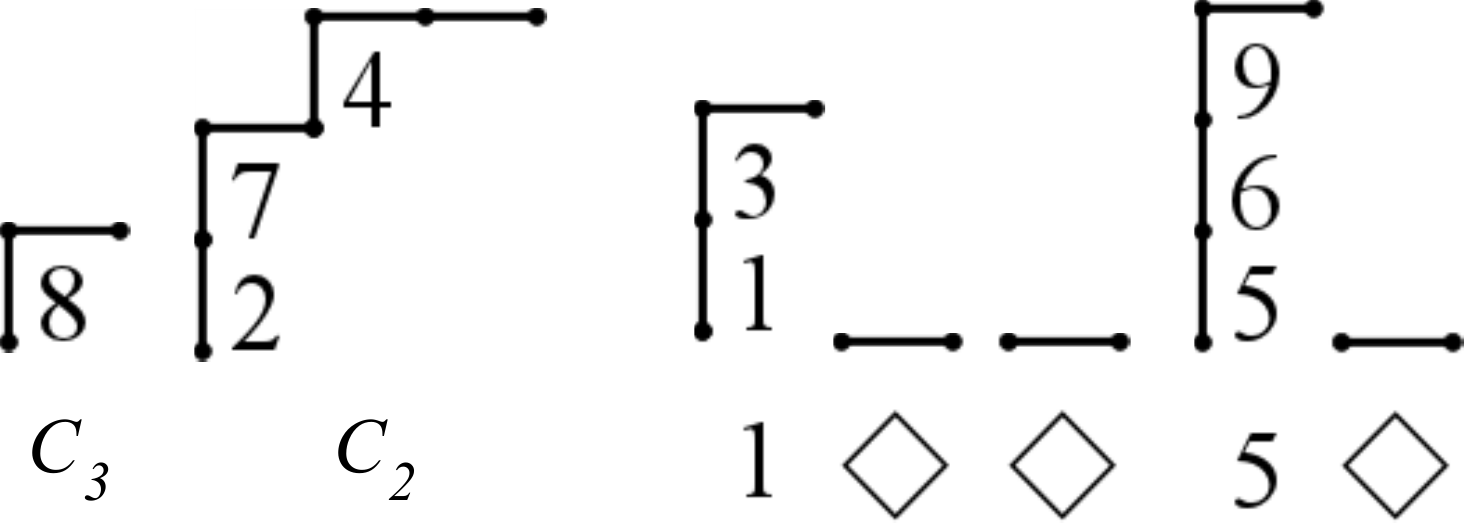}
\end{center}

For $2 \leq i \leq d$, to construct $C_i$ (having constructed $C_{i+1}, \dots, C_d$ and updated $w$ 
appropriately), start at the $i^{th}$ position in $w$ (which contains a number).  Read the lattice
path from left to right until it returns to the diagonal.  This gives the prime component $C_i$.
Delete the symbols involved in $C_i$ from $w$.

The construction of $C_1$ is different.  
If the word $w$ has length $1$ at this stage, then $w$ consists of the single letter $1$.  In this case,
we let $C_1$ be the Dyck path of size $1$ labeled by $1$.
If the word $w$ has length $> 1$, then  $w$ starts with $1$ and ends in
$\diamond$.  Delete the final $\diamond$ from $w$.  There exists a unique cyclic rotation
of the lattice path components above this shortened word which is of the form of 
a prime Dyck path missing its final east step.
This cyclic rotation (together with its labels and final east step) is the component $C_1$.

At this point, we have obtained the prime components $C_d, C_{d-1}, \dots, C_1$.  Suppose that the cycle index
is $k \in \{0, 1, \dots, d-1\}$.  The labeled Dyck path $\gamma(R)$ is defined to be the left-to-right
concatenation  $C_{d-k+1} C_{d-k+2} \dots C_d C_1 C_2 \dots C_{d-k}$.  In our example, 
the cycle index is $1$, so we have that $\gamma(R)$ is the left-to-right concatenation $C_3 C_1 C_2$.
This is precisely
the labeled Dyck path in Figure~\ref{dyckparking}.
\end{proof}

\section{Closing remarks}
\label{Closing remarks}

In this paper, we constructed bijections between the regions of the Shi and Ish arrangements which
preserved certain properties and statistics.  While the bijection preserving dominance
in Theorem~\ref{dominance-theorem}
was reasonably transparent and involved an application of the Cycle Lemma, the bijection
preserving degrees of freedom in Theorem~\ref{freedom-theorem} was more involved and ad hoc.
A possible method for obtaining a more conceptual proof of Theorem~\ref{freedom-theorem} (and
a better understanding of Shi/Ish duality) would be to find a rook word analog of a factorization property of
parking functions.

More precisely, if $u = u_1 \dots u_n \in \Park_n$, $v = v_1 \dots v_m \in \Park_m$, and $\{I / J\}$ is a 
two-block
ordered
set partition of $[n+m]$ with $|I| = n$ and $|J| = m$, we obtain a new parking function
$u(I) \otimes v(J) = w_1 \dots w_{n+m} \in \Park_{n+m}$ by the rule that the restriction of 
$w_1 \dots w_{n+m}$ to the positions in $I$ is $u_1 \dots u_n$ and the restriction of 
$w_1 \dots w_{n+m}$ to the positions in $J$ is $(v_1 + n) \dots (v_m + n)$.
(This is closely related to the `shuffle product' in a Hopf algebra structure on the set of parking functions
due to Novelli and Thibon \cite{NT}.)  Using the map $\omega$ of 
Proposition~\ref{parking-shi-characterization} to define the `degrees of freedom' of a parking function,
we see that the degrees of freedom of $u(I) \otimes v(J)$ is the sum of the degrees of freedom 
of $u$ and the degrees of freedom of $v$.

On the other hand, given any parking function $w = w_1 \dots w_n \in \Park_n$ with $d$
degrees of freedom, 
there exists a 
unique ordered set partition 
$\{ B_1 / B_2 /  \dots / B_d \}$ of $[n]$ and
unique prime parking functions $u_i \in \Park'_{|B_i|}$ such that
$w = u_1(B_1) \otimes \dots \otimes u_d(B_d)$.  For example, if $w \in \Park_9$ is the parking 
function whose labeled Dyck path is shown in Figure~\ref{dyckparking}, then
$d = 3$, 
$\{B_1 / B_2 / B_3 \} = \{ 8 / 1, 3, 5, 6, 9 / 2, 4, 7 \}$, 
$u_1 = 1$, $u_2 = 22111$, and $u_3 = 121$.  

The parking function factorization
$w = u_1(B_1) \otimes \dots \otimes u_d(B_d)$ also respects position partitions.  Namely, the position partition
of $w$ is the partition $\Pi$ of $[n]$ which refines the (unordered) set partition
$\{B_1, \dots, B_d\}$ such that the restriction of $\Pi$ to $B_i$ is order isomorphic to the
position partition of $u_i$.

Solving the following problem would give a new proof of Theorem~\ref{freedom-theorem} and an
Ish analog of the parking function factorization.

\begin{problem}
\label{factorization}
Let $n \geq 0$.  Give a bijection
\begin{equation}
\Rook_n \cong \biguplus_{\{B_1 / \dots / B_d\} \text{an ordered} \atop
\text{set partition of $[n]$}} \Rook'_{|B_1|} \times \dots \times \Rook'_{|B_d|}
\end{equation}
such that if $w \longleftrightarrow (u_1, \dots , u_d)$ and $(u_1, \dots, u_d)$ corresponds 
to $\{B_1 / \dots / B_d\}$, then
\begin{itemize}
\item  $w$ has $d$ degrees of freedom and 
\item the position partition 
of $w$ is the unique partition $\Pi$ of $[n]$ such that $\Pi$ refines $\{B_1, \dots, B_d \}$ and the restriction
of $\Pi$ to $B_i$ order isomorphic to the position partition of $u_i$.  
\end{itemize}
Here we define the 
``degrees of freedom" of a rook word to be the degrees of freedom of the corresponding Ish region
(via the map $\lambda \circ \widehat{\rho}$).
\end{problem}

In solving Problem~\ref{factorization}, it may be useful to have an intrinsic notion of the degrees of
 freedom of a rook word.
 Define the {\sf tail} of a rook word $w = w_1 \dots w_n \in \Rook_n$ to be the maximal 
 set of integers of the form $[j, n]$ for $w_1 +1 \leq j \leq n+1$
 such that every integer in $[j, n]$ appears as a letter of $w$.
We leave it to the reader to check that the number of degrees of freedom of $w$ is $w_1$ plus the 
number of integers in the tail of $w$.  For example, $211634 \in \Rook_6$ has tail $\{6\}$ and 
$2+1 = 3$ degrees of freedom.

\section{Acknowledgements}

The authors thank Drew Armstrong and Igor Pak for helpful conversations.  
E. Leven was partially supported by NSF Grant DGE-1144086.
B. Rhoades was partially
supported by NSF Grant DMS-1068861.  A. T. Wilson was partially supported by
the Department of Defense, Air Force Office of Scientific Research, National 
Defense Science and Engineering Graduate (NDSEG)
Fellowship, 32 CFR 168a.

\end{document}